\documentclass[a4paper, 12pt]{amsart}
\usepackage{amsmath,amssymb,amsthm}
\usepackage{amscd}
\setlength{\oddsidemargin}{30mm}
\addtolength{\oddsidemargin}{-1in}
\setlength{\evensidemargin}{30mm}
\addtolength{\evensidemargin}{-1in}
\setlength{\textwidth}{150mm}
\usepackage{array,enumerate}
\newtheorem{Thm}{Theorem}[section]

\newtheorem{Lem}[Thm]{Lemma}

\newtheorem{Thmint}{Theorem}[section]
\newtheorem{MThm}{Main Theorem}

\newtheorem{Corint}[Thmint]{Corollary}

\theoremstyle{definition}
\newtheorem{Rem}[Thm]{Remark}
\newtheorem{Defint}[Thmint]{Definition}
\newtheorem{Exmint}[Thmint]{Example}
\newtheorem{Def}[Thm]{Definition}

\newcommand{\Cs}{C$^\ast$}

\newcommand{\ip}[1]{\mathopen{\langle}#1\mathclose{\rangle}}
\newcommand{\id}{\mbox{\rm id}}

\newcommand{\rg}{\mathop{{\mathrm C}_{\mathrm r}^\ast}}

\newcommand{\rc}{\mathop{\rtimes _{\mathrm r}}}

\newcommand{\rca}[1]{\mathop{\rtimes _{{\mathrm r}, #1}}}

\newcommand{\Ped}[1]{\mathrm{Ped}({#1})}
\newcommand{\Her}[1]{\mathrm{Her}({#1})}
\newcommand{\cl}[1]{\mathrm{cl}({#1})}

\newcommand{\IB}{\mathbb B}

\newcommand{\IK}{\mathbb K}

\newcommand{\IN}{\mathbb N}

\newcommand{\cI}{\mathcal I}
\newcommand{\cJ}{\mathcal J}
\newcommand{\cU}{\mathcal U}
\newcommand{\fH}{\mathfrak{H}}
\DeclareMathOperator{\stlim}{strict-}

\newcommand{\cE}{\mathcal E}

\newcommand{\cM}{\mathcal M}
\DeclareMathOperator{\supp}{supp}

\DeclareMathOperator{\Cso}{{\rm C}^\ast}

\DeclareMathOperator{\bigfp}{\lower0.25ex\hbox{\LARGE $\ast$}}
\newcommand{\ad}{\mathop{\rm Ad}}
\setcounter{tocdepth}{1}
\title[Simplicity and tracial weights on non-unital crossed products]{Simplicity and tracial weights on non-unital reduced crossed products}
\author{Yuhei Suzuki}
\subjclass[2020]{Primary~ 22D25, Secondary~46L35, 43A65}
\keywords{Twisted reduced crossed product, \Cs-simplicity, classification of tracial weights}
\address{Department of Mathematics, Faculty of Science, Hokkaido University,
Kita 10, Nishi 8, Kita-Ku, Sapporo, Hokkaido, 060-0810, Japan}
\email{yuhei@math.sci.hokudai.ac.jp}
\begin{document}
\maketitle
\begin{abstract}
We extend theorems of Breuillard--Kalantar--Kennedy--Ozawa on unital reduced crossed products
to the non-unital case under mild assumptions.
As a result simplicity of \Cs-algebras is stable under taking reduced crossed products over discrete \Cs-simple groups, and a similar result for uniqueness of tracial weight.
Interestingly, our analysis on tracial weights involves von Neumann algebra theory.

Our generalizations have two applications.
The first is to locally compact groups.
We establish stability results of (non-discrete) C$^\ast$-simplicity and the unique trace property under discrete group extensions.
The second is to the twisted crossed product.
Thanks to the Packer--Raeburn theorem, our results lead to (generalizations of)
the results of Bryder--Kennedy by a different method.
\end{abstract}
\section{Introduction}
In this article, we aim to extend the remarkable theorems of Breuillard--Kalantar--Kennedy--Ozawa \cite{BKKO} on simplicity and classification of tracial states
of the \emph{unital} reduced crossed product \Cs-algebras
to the \emph{non-unital} case.
Besides technical intrinsic interests, our generalization
has two applications: Stability of \Cs-simplicity and the unique trace property
of \emph{non-discrete} locally compact groups by discrete group extensions,
and a new simpler approach (and generalizations)
of the results of Bryder--Kennedy \cite{BK} on unital reduced twisted crossed product \Cs-algebras. 

In the original proofs of \cite{BKKO} (see also \cite{Haa}, \cite{KK} for alternative approaches),
the unit of the underlying \Cs-algebra plays prominent roles at several key steps.
The main goal of this work is thus to find suitable replacements of the unit.

For the simplicity result, we employ the following elements as a replacement of the unit.
We say an element $a\in (A_+)_1$ is
\emph{completely full} if $f(a)$ is full for all functions $f\in C_0(0, 1]_+$ with $f(1)>0$.
(Here we recall that a positive element $a\in A_+$ is said to be
\emph{full} if the (closed) ideal generated by $a$ is equal to $A$.)

Observe that $A$ admits a completely full element if (and only if) it admits
a full element of the form
$f(a)$; $a \in (A_+)_1$, $f\in C_c(0, 1]_+$.
Indeed for such $a$ and $f$, choose a continuous function $g\in (C_0(0, 1]_+)_1$
satisfying $g\equiv 1$ on $\supp(f)$.
Then $g(a)$ is completely full.
Indeed for any $h\in C_0(0, 1]$ we have
$h(g(a))f(a)=h(1)f(a)$.
\begin{Exmint}
Here we give a list of basic examples of \Cs-algebras which admit a completely full element.
\begin{enumerate}[\upshape(1)]
\item For a simple \Cs-algebra $A$,
any nonzero positive element is full.
Thus any positive element of norm one is completely full.
\item For a unital \Cs-algebra $A$,
the stabilization $A \otimes \IK(\fH)$ admits a completely full element.
Indeed any nonzero positive element in $\IK(\fH)$ is full in $A \otimes \IK(\fH)$.
\item More generally, if both two \Cs-algebras $A, B$ admit a completely full element,
then so does the minimal tensor product $A \otimes B$.
Indeed take completely full elements $a\in (A_+)_1$, $b\in (B_+)_1$.
Choose functions $f, g \in (C_c(0, 1]_+)_1$ satisfying
\[f(1)=1,\quad \supp(f) \subset [1/2, 1],\quad g\equiv 1 {\rm~on~} [1/4, 1].\]
Then as $g(st) f(s)f(t)=f(s)f(t)$ for all $s, t\in [0, 1]$, it follows from the Gelfand--Naimark theorem
that
\[g(a\otimes b)\cdot (f(a)\otimes f(b)) =f(a) \otimes f(b).\]
Since $f(a)\otimes f(b)$ is full, so is $g(a\otimes b)$.
Hence $g(a\otimes b)$ is completely full in $A \otimes B$.
\end{enumerate}
\end{Exmint}
Our simplicity theorem is stated as follows.
\begin{MThm}\label{MThm:simple}
Let $\Gamma$ be a discrete \Cs-simple group.
Let $A$ be a \Cs-algebra with a completely full element.
Let $(\alpha, u) \colon \Gamma \curvearrowright A$ be a twisted action.
Then the reduced twisted crossed product \Cs-algebra $A \rca{\alpha, u} \Gamma$ is simple if and only if
$A$ has no proper $\Gamma$-invariant ideal.
\end{MThm}
Note that in the unital case (which is the case established by Bryder--Kennedy \cite{BK} in the twisted case),
one can skip many technical steps in our approach.

As a particular consequence, we establish the following permanence property.
\begin{Corint}\label{Corint:simple}
Simplicity of \Cs-algebras is stable under taking
reduced twisted crossed products over discrete \Cs-simple groups.
\end{Corint}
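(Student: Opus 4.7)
The plan is to obtain the corollary as an immediate consequence of Main Theorem~\ref{MThm:simple}. Given a simple \Cs-algebra $A$, a discrete \Cs-simple group $\Gamma$, and a twisted action $(\alpha, u)\colon \Gamma \curvearrowright A$, I need to verify the two hypotheses of the Main Theorem: that $A$ admits a completely full element, and that $A$ has no proper $\Gamma$-invariant ideal.

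First I would invoke Example~(1): since $A$ is simple, every nonzero positive element is full, and in particular any $a \in (A_+)_1$ with $\|a\|=1$ satisfies that $f(a)$ is nonzero (hence full) for every $f \in C_0(0,1]_+$ with $f(1)>0$; thus $a$ is completely full. Next, because $A$ is simple, its only ideals are $\{0\}$ and $A$, so \emph{a fortiori} $A$ admits no proper (i.e.\ nonzero and non-$A$) $\Gamma$-invariant ideal. Applying Main Theorem~\ref{MThm:simple} then yields simplicity of $A \rca{\alpha, u} \Gamma$. There is no real obstacle here; the content of the corollary is entirely absorbed into the Main Theorem, with the role of the completely full element reduction being precisely to make this deduction formal in the non-unital setting.
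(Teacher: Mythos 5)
Your deduction is correct and is exactly how the paper obtains Corollary~\ref{Corint:simple}: the paper presents it as an immediate consequence of Main Theorem~\ref{MThm:simple}, using Example~(1) (any norm-one positive element of a simple \Cs-algebra is completely full, since $1$ lies in the spectrum of such an element) together with the trivial observation that a simple \Cs-algebra has no proper $\Gamma$-invariant ideal. Nothing is missing.
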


The problem on the classification of tracial weights is more subtle.
For instance, in the non-unital case the problem involves \emph{unbounded} functionals.
We introduce the following set as our replacement of the unit.
At first glance, the definition of the set may look quite technical and awkward.
However we will see in Theorem \ref{Thm:comparison} that the set has a useful natural characterization. 
Let $\Ped{A}$ denote the Pedersen ideal of $A$ (see Section \ref{SS:Ped} for details).
For $a\in A_+$, we denote by
\[\Her{a, A}:=\cl{aAa}\]
 the hereditary \Cs-subalgebra of $A$
generated by $a$.
\begin{Defint}\label{Def:cI}
For a twisted action $(\alpha, u) \colon \Gamma \curvearrowright A$ and
a proper tracial weight $\tau \in {\rm T}(A)$,
define $\cI(\alpha, \tau)$ to be the set of all $a \in (\Ped{A}_{+})_1$ satisfying
the following condition:
For any $g\in \Gamma$,
there is a bounded sequence $(v_{n})_{n=1}^{\infty}$ in $A$
satisfying
\[v_n a v_n^\ast \in \Her{\alpha_g(a), A},\]
\[\lim_n \|a^{1/2}v_n^\ast v_n a^{1/2} -a \|_\tau =0,\quad \lim_n \|v_n a v_n^\ast -\alpha_g(a) \|_\tau =0.\]
\end{Defint}

The next theorem gives a useful natural characterization of the set $\cI(\alpha, \tau)$.
\begin{Thmint}\label{Thm:comparison}
Let $\tau \in {\rm T}(A)$.
Then $a\in (\Ped{A}_{+})_1$ sits in $\cI(\alpha, \tau)$
if and only if $\nu(a^n)=\nu(\alpha_g(a^n))$
for all $n\in \IN$, $g\in \Gamma$, and $\nu \in {\rm T}(A)$ with $\nu\leq \tau$.
\end{Thmint}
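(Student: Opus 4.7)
Both directions will pivot on the cyclic-trace identity
\[
\nu\bigl((vav^*)^n\bigr) \;=\; \nu\bigl((a^{1/2}v^*va^{1/2})^n\bigr), \qquad v\in A,\ \nu\in T(A),\ n\in\IN,
\]
obtained by a single traciality rearrangement $\nu(xy)=\nu(yx)$. The forward direction then reduces to an $L^2$-estimate under a Pedersen cutoff, while the backward direction requires extracting a partial isometry inside the von Neumann closure of $\tau$.

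For the forward direction, I take the bounded sequence $(v_k)$ granted by $a\in\cI(\alpha,\tau)$ and set $X_k := a^{1/2}v_k^*v_ka^{1/2}$. The telescoping $X^n-Y^n=\sum_{j=0}^{n-1}X^j(X-Y)Y^{n-1-j}$ together with $\|xy\|_\tau\le\|x\|\cdot\|y\|_\tau$ upgrades the defining $L^2(\tau)$-convergences to $\|X_k^n-a^n\|_\tau\to 0$ and $\|(v_kav_k^*)^n-\alpha_g(a)^n\|_\tau\to 0$, which both pass to $\|\cdot\|_\nu$ since $\nu\le\tau$. To convert $L^2(\nu)$-convergence into convergence of $\nu$-values, I pick a cutoff $h\in(\Ped{A})_+$ with $h^{1/2}a^{1/2}=a^{1/2}$ (from continuous functional calculus on the $b\in A_+$ realising $a\in\Ped{A}_+$), observe $h^{1/2}X_k^nh^{1/2}=X_k^n$ and $h^{1/2}a^nh^{1/2}=a^n$ since both sit in $\Her{a,A}$, and apply Cauchy--Schwarz in $L^2(\nu)$:
\[
|\nu(X_k^n-a^n)| \;=\; |\nu(h^{1/2}(X_k^n-a^n)h^{1/2})| \;\le\; \nu(h)^{1/2}\,\|X_k^n-a^n\|_\nu \;\to\; 0.
\]
The mirror estimate with $h':=\alpha_g(h)$ handles the $\alpha_g$-side, and the cyclic identity then yields $\nu(a^n)=\nu(\alpha_g(a^n))$.

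For the backward direction, I pass to the GNS construction of $\tau$ and work inside the semifinite tracial von Neumann algebra $(M,\bar\tau):=(\pi_\tau(A)'',\bar\tau)$. Every central projection $p\in Z(M)$ defines a dominated tracial weight $\bar\tau(p\,\cdot\,)|_A\le\tau$, so the hypothesis extends (by polynomial and monotone-class approximation, with the same Pedersen cutoff controlling the norm-to-trace step) to $\bar\tau(pf(a))=\bar\tau(pf(\alpha_g(a)))$ for every bounded Borel $f\ge 0$ with $f(0)=0$. Disintegrating $M$ over its centre, $a$ and $\alpha_g(a)$ are equimeasurable in each factor fibre, and the classification of positive elements in tracial factors up to unitary conjugacy by spectral distribution delivers a partial isometry $u\in M$ with $u^*u=s(a)$, $uu^*=s(\alpha_g(a))$, and $uau^*=\alpha_g(a)$. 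Kaplansky density now supplies $w_k\in A$ with $\|w_k\|\le 1$ and $w_k\to u$ strong-$\ast$; setting $f_k(t):=\min(kt,1)$ and $v_k:=f_k^{1/2}(\alpha_g(a))\,w_k\in A$, the hereditary containment $v_kav_k^*\in\Her{\alpha_g(a),A}$ is automatic, and the strong limits $a^{1/2}v_k^*v_ka^{1/2}\to a$, $v_kav_k^*\to\alpha_g(a)$ hold in $M$. Expanding $\|X_k-a\|_\tau^2$ and $\|v_kav_k^*-\alpha_g(a)\|_\tau^2$ into three $\tau$-values apiece and invoking normal-trace dominated convergence on each (using the uniform bounds $X_k^2\le a$ and $\alpha_g(a)^{1/2}v_kav_k^*\alpha_g(a)^{1/2}\le\alpha_g(a)$, the cyclic identity $\tau((v_kav_k^*)^2)=\tau(X_k^2)$, and the case $\nu=\tau$ of the hypothesis) delivers the two required $\|\cdot\|_\tau$-convergences.

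The main obstacle is the backward direction: passing from the scalar hypothesis on dominated traces to the operator-theoretic equivalence of $a$ and $\alpha_g(a)$ inside $M$ forces the use of semifinite von Neumann algebra machinery (central disintegration, Radon--Nikodym for dominated normal tracial weights, and equidistribution $\Leftrightarrow$ unitary conjugacy in each factor fibre)---presumably the ``von Neumann algebra theory'' flagged in the paper's abstract. Once $u$ is in hand, the Kaplansky approximation and the ensuing $L^2$-estimates are routine modulo the specific choice of cutoffs.
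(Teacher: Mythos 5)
Your overall architecture coincides with the paper's: the easy direction via the cyclic trace identity plus Cauchy--Schwarz, and the hard direction by passing to $M=\pi_\tau(A)''$, matching spectral data of $a$ and $\alpha_g(a)$ against all dominated normal tracial weights, invoking the comparison theorem, and then descending to $A$ via Kaplansky density and a functional-calculus cutoff by $\alpha_g(a)$. However, there is one genuine error in your hard direction: from equimeasurability in each factor fibre you claim an \emph{exact} intertwiner, a partial isometry $u\in M$ with $u^*u=s(a)$, $uu^*=s(\alpha_g(a))$ and $uau^*=\alpha_g(a)$. This is false in general: two positive elements of a ${\rm II}_1$ factor with the same spectral distribution need not be conjugate by a unitary (or partial isometry) of the factor --- take diffuse positive generators of a Cartan masa and of a singular masa; an exact intertwiner would conjugate the masas. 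Equimeasurability only yields, via simultaneous Murray--von Neumann equivalence of the spectral projections $\chi_{[s,t)}(a)\sim\chi_{[s,t)}(\alpha_g(a))$ over common partitions, a sequence of unitaries $u_n\in M^{\rm u}$ with $\|u_n a u_n^*-\alpha_g(a)\|\to 0$. This is exactly what the paper's Lemma \ref{Lem:comparison} uses, and it is all you need: applying Kaplansky density to the $u_n$ (a diagonal argument over $n$) still produces a bounded net $x_i\in(A)_1$ with $x_i a x_i^*\to\alpha_g(a)$ strong-$*$, after which your cutoff $f_k^{1/2}(\alpha_g(a))x_i$ and the $\|\cdot\|_\tau$ estimates go through as in the paper (which uses the power $\alpha_g(a)^{s}x_i$ instead of $\min(kt,1)^{1/2}$). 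So the step as stated would fail, but the repair is local and is precisely the paper's route.

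A smaller point on your forward direction: the cutoff $h\in\Ped{A}_+$ with $h^{1/2}a^{1/2}=a^{1/2}$ need not exist for an arbitrary $a\in(\Ped{A}_+)_1$; only the elements of the generating cone of the Pedersen ideal admit such local units, and a general positive element of $\Ped{A}$ is merely dominated by finite sums of those. The cutoff is also unnecessary: all the elements $X_k^n-a^n$ and $(v_kav_k^*)^n-\alpha_g(a)^n$ lie in $\Her{a,A}$ resp.\ $\Her{\alpha_g(a),A}$, on which $\nu$ is a bounded positive functional by Lemma \ref{Lem:her}, so the Cauchy--Schwarz inequality $|\nu(y)|^2\le\|\nu|_{\Her{a,A}}\|\,\nu(y^*y)$ converts your $\|\cdot\|_\nu$-convergence directly into convergence of the $\nu$-values. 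With these two corrections your proof matches the paper's in substance.
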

Interestingly, the proof of Theorem \ref{Thm:comparison} (and hence Main Theorem \ref{MThm:trace} below) involves
von Neumann algebra theory.

We say that a subset $S \subset (A_{+})_1$ is \emph{non-degenerate}
if its strict closure in the multiplier algebra $\cM(A)$ contains the identity.
Now we are able to state our main theorem on the classification of tracial weights.
\begin{MThm}\label{MThm:trace}
Let $(\alpha, u) \colon \Gamma \curvearrowright A$ be a twisted action.
Let $\tau \in {\rm T}(A \rca{\alpha, u} \Gamma)$.
Assume that $\cI(\alpha, \tau|_A) \subset (A_+)_1$ is non-degenerate.
Then $\tau$
factors through $E_{R_a(\Gamma)}$.
\end{MThm}
This together with Theorem \ref{Thm:comparison} in particular leads to the following permanence property.
\begin{Corint}\label{Corint:trace}
Uniqueness of proper tracial weights $($up to scalar multiple$)$ is stable under taking
the reduced twisted crossed product \Cs-algebra of a trace preserving twisted action
over a discrete group with trivial amenable radical.
\end{Corint}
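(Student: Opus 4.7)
My plan is to apply Main Theorem~\ref{MThm:trace} to every proper tracial weight $\tau$ on $B := A \rca{\alpha, u} \Gamma$. Since $R_a(\Gamma) = \{e\}$, the conditional expectation $E_{R_a(\Gamma)}$ coincides with the canonical faithful conditional expectation $E \colon B \to A$. If the theorem applies, then $\tau = \tau \circ E$, and combined with the uniqueness of proper tracial weights on $A$ this will force $\tau = c\,(\sigma \circ E)$, where $\sigma$ denotes the essentially unique proper tracial weight on $A$. Since $\sigma \circ E$ is itself a proper tracial weight on $B$, this yields the desired uniqueness up to scalar.

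The first step is to show that $\tau|_A$ is a non-zero proper tracial weight on $A$. Lower semicontinuity and traciality are automatic, while finiteness on $\Ped{A}_+$ follows from the compatibility of Pedersen ideals under the inclusion $A \hookrightarrow B$, which admits a faithful conditional expectation. Non-vanishing follows from a standard argument: for any $a \in A$ and $g \in \Gamma$, $(a\lambda_g)(a\lambda_g)^\ast = aa^\ast \in A$, so $\tau|_A = 0$ forces $\tau(y^\ast y) = 0$ for every $y$ in the algebraic crossed product, hence $\tau = 0$ by a GNS-type argument and lower semicontinuity. By the uniqueness on $A$, $\tau|_A = c \sigma$ for some $c > 0$. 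The trace-preservation combined with uniqueness additionally forces $\sigma$ to be $\Gamma$-invariant: $\sigma \circ \alpha_g$ is again a proper tracial weight on $A$, hence a positive scalar multiple of $\sigma$, and the scalar must equal $1$.

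The next step uses Theorem~\ref{Thm:comparison} to verify non-degeneracy of $\cI(\alpha, \tau|_A)$. Let $\nu \in T(A)$ with $\nu \leq c\sigma$. Then $\nu$ is lower semicontinuous and finite on $\Ped{A}_+$, so $\nu$ is proper; by uniqueness $\nu = \lambda \sigma$ for some $\lambda \in [0, c]$, which is $\Gamma$-invariant. Therefore $\nu(\alpha_g(a^n)) = \nu(a^n)$ for every $a \in (\Ped{A}_+)_1$, $n \in \IN$, and $g \in \Gamma$. Theorem~\ref{Thm:comparison} then gives $\cI(\alpha, \tau|_A) = (\Ped{A}_+)_1$, which is non-degenerate because the Pedersen ideal of $A$ contains a norm-one approximate unit whose strict limit is the identity of the multiplier algebra.

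Applying Main Theorem~\ref{MThm:trace} now yields $\tau = \tau \circ E = c\,(\sigma \circ E)$, completing the proof. The main technical obstacle I anticipate is the first step, where one must carefully confirm that $\tau|_A$ is indeed a non-zero proper tracial weight; both the compatibility of Pedersen ideals across $A \hookrightarrow B$ and the vanishing argument for proper traces trivial on $A$ should be standard but warrant explicit verification. Once that is in place, the remaining steps are essentially formal consequences of Theorems~\ref{MThm:trace} and~\ref{Thm:comparison}.
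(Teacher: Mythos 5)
Your proposal is correct and follows exactly the route the paper intends (the paper leaves this corollary as an immediate combination of Main Theorem \ref{MThm:trace} with Theorem \ref{Thm:comparison}): verify that $\tau|_A$ is a nonzero proper tracial weight, use uniqueness on $A$ plus trace-preservation to see every $\nu\in T(A)$ with $\nu\leq\tau|_A$ is $\Gamma$-invariant, conclude via Theorem \ref{Thm:comparison} that $\cI(\alpha,\tau|_A)=(\Ped{A}_+)_1$ is non-degenerate, and apply Main Theorem \ref{MThm:trace} with $E_{R_a(\Gamma)}=E_{\{e\}}$. The supporting details you flag (compatibility of Pedersen ideals under $A\subset B$, and that a proper tracial weight vanishing on $A$ vanishes on all of $B$) are handled as you describe, so no gap remains.
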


We again note that in the unital case (which is the case established by Bryder--Kennedy \cite{BK} in the twisted case),
one can skip many technical steps in our proof.

Our results have consequences for the reduced \Cs-algebras of \emph{locally compact groups}.
Because we are interested in non-discrete groups,
we first extend the definition of the unique trace property to
general locally compact groups.
\begin{Defint}
We say that a unimodular locally compact group $G$ has the \emph{unique trace property}
if, up to scalar multiple, the Plancherel tracial weight is the only proper tracial weight on $\rg(G)$.
\end{Defint}
We refer the reader to Section 2.6 of \cite{Rau}
for the definition and basic facts on the Plancherel weights.
For discrete groups, clearly this definition is equivalent to the usual definition.
We note that for unimodular groups $G$ with the unique trace property,
one can recover the group von Neumann algebra $L(G)$ from $\rg(G)$
by taking the strong closure in the GNS representation of any tracial weight.

By applying Corollaries \ref{Corint:simple} and \ref{Corint:trace} to the reduced group \Cs-algebras,
we establish the following stability of \Cs-simplicity
and the unique trace property
of \emph{locally compact} groups.
(Note that stronger statements are shown in Theorem 1.4 of \cite{BKKO} in the discrete group case.
See also Corollary 4.7 in \cite{BK} for a proof of the following theorem in the discrete group case.)
We stress that in both statements,
we unavoidably encounter twisted crossed products of
\emph{non-unital} \Cs-algebras (namely, the reduced group \Cs-algebras of open normal subgroups). This illustrates importance 
to generalize the results in \cite{BKKO}, \cite{BK} to the non-unital case.
\begin{MThm}\label{MThm:lcg}
Let $G$ be a locally compact group with an open normal subgroup $N$.
Then the following statements hold true.
\begin{enumerate}[\upshape(1)]
\item
If $N$ and $G/N$ are \Cs-simple, then so is $G$.
\item 
If $N$, $G$ are unimodular
and both $N$ and $G/N$ have the unique trace property,
then so does $G$.
\end{enumerate}
\end{MThm}
For the proof of statement (2),
we note that when both $N$ and $G$ are unimodular,
the conjugate action $G \curvearrowright N$ preserves the Haar measure on $N$.

The existence of non-discrete \Cs-simple groups
was a longstanding open problem explicitly asked by de la Harpe (see \cite{Har85} and \cite{Har}, Question 5).
Recently such groups were constructed in \cite{Suzsim}, \cite{Suz}\footnote{We note that the solution of the problem was originally claimed in \cite{Rau}.
However it turned out that the arguments in \cite{Rau} contain a fatal flaw.
See the erratum \cite{Rau2} for details.}.
Note that all unimodular \Cs-simple groups given there also have the unique trace property.
 \Cs-simplicity and the unique trace property of non-discrete groups
are still quite mysterious.
We believe that Main Theorem \ref{MThm:lcg}
sheds a new light on these properties.

\subsection*{Organization of the article}
In Section \ref{S:Pre}, we recall some facts used in the article.
Notations and terminologies are also fixed.
In Section \ref{S:simple} we consider the simplicity of the reduced (twisted) crossed products.
Finally, in Section \ref{S:trace}, we study proper tracial weights on the reduced (twisted) crossed products.
\section{Preliminaries}\label{S:Pre}
Here we recall some facts used in the present article.
We also fix some notations.

\subsection{Notations and conventions}
\begin{itemize}
\item
Throughout the article, let $A$ be a \Cs-algebra,
let $\Gamma$ be a discrete group, and let $\fH$ be a Hilbert space.
\item A \emph{$\Gamma$-\Cs-algebra} is a \Cs-algebra equipped with a $\Gamma$-action.
\item An \emph{ideal} of a \Cs-algebra is assumed to be self-adjoint and norm closed.
An \emph{algebraic ideal} of a \Cs-algebra is a self-adjoint ideal which is not required to be norm closed.
\item Set $(A)_1:= \{a\in A: \|a\| \leq 1\}$, $A_+:= \{ a\in A: a \geq 0\}$.
More generally, for a subset $S \subset A$,
we set $(S)_1:= S \cap (A)_1$, $S_+:=S\cap A_+$.
\item For a subset $S \subset A$,
denote by $\cl{S}$ the norm closure of $S$.
Denote by $\Cso(S)$ the (not necessary unital) \Cs-subalgebra of $A$ generated by $S$.
\item When $A$ is unital, let $A^{\mathrm u}$ denote the unitary group of $A$.
\item A series $\sum_n a_n$ in a \Cs-algebra is required to converge in norm.
\item Denote by $\cM(A)$ the multiplier algebra of $A$.
\item Denote by $\IB(\fH)$, $\IK(\fH)$, $\cU(\fH)$
the \Cs-algebra of all bounded operators, the \Cs-algebra of all compact operators,
and the group of unitary operators on $\fH$ respectively.
\end{itemize}
\subsection{Pedersen ideal}\label{SS:Ped}
Pedersen \cite{Ped} discovered that any \Cs-algebra $A$
has a (unique) smallest norm dense algebraic ideal $\Ped{A}$.
The algebraic ideal $\Ped{A}$ is referred to as the \emph{Pedersen ideal} of $A$.
It is clear from the definition of $\Ped{A}$ given in \cite{Ped} that
any \Cs-algebra inclusion $A \subset B$
restricts to the inclusion $\Ped{A} \subset \Ped{B}$.
For details, we refer the reader to Section 5.6 of \cite{Pedbook}.

\subsection{Tracial weights on \Cs-algebras}
Recall that a \emph{tracial weight} $\tau$ on a \Cs-algebra $A$
is a map
\[\tau \colon A_+ \rightarrow [0, \infty]\]
satisfying
\[\tau(a+ b)=\tau(a)+\tau(b),\quad \tau(\lambda a) = \lambda \tau(a), \quad
\tau(x^\ast x)= \tau(x x^\ast)\]
for all $a, b \in A_+$, $\lambda \in [0, \infty)$, $x\in A$.
Here we employ the conventions $\lambda+\infty=\infty+\lambda=\infty+\infty=\infty$ and $0 \cdot \infty =0$.
A tracial weight $\tau$ is said to be \emph{proper}
if it is lower semi-continuous, densely defined, and nonzero.
We denote by ${\rm T}(A)$ the set of all proper tracial weights on $A$.
It is known that each $\tau \in {\rm T}(A)$
gives rise to a tracial positive linear functional on $\Ped{A}$
by linear extension of the restriction $\tau|_{\Ped{A}_+}$. 
We use the same symbol $\tau$ to denote this linear functional.
Moreover this correspondence is bijective; see Proposition 5.6.7 of \cite{Pedbook}.
We note that when $\tau\in {\rm T}(A)$ is bounded, that is, when $\infty\not\in \tau(A_+)$,
then $\tau$ extends to a tracial positive (bounded) linear functional on $A$.
We also denote this extension by $\tau$.

For $\tau, \nu \in {\rm T}(A)$,
we write $\nu \leq \tau$ when they satisfy the inequality
$\nu(a) \leq \tau(a)$ for all $a\in A_+$.
For $\tau \in {\rm T}(A)$ and $a \in \Ped{A}$,
set
\[\|a\|_\tau := \tau(a^\ast a)^{1/2}.\]
This defines a seminorm on $\Ped{A}$.
By the tracial condition, for any $a\in \Ped{A}$ and any $b\in A$, one has
\[\|ab\|_\tau\leq \|a\|_\tau \|b\|.\]

In the study of proper tracial weights, the following fact is useful.

\begin{Lem}[\cite{Pedbook}, Proposition 5.6.2] \label{Lem:her}
For any $a\in \Ped{A}_+$,
one has $\Her{a, A} \subset \Ped{A}$.
Consequently every $\tau \in {\rm T}(A)$
is bounded on $\Her{a, A}$.
\end{Lem}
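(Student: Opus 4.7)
The plan uses Pedersen's structural theorem that $\Ped{A}$ is not only a two-sided algebraic $*$-ideal of $A$ but also hereditary as a sub-$*$-algebra, i.e., $0 \le c \le d$ with $d \in \Ped{A}_+$ forces $c \in \Ped{A}_+$. With this as the main input, showing $\Her{a, A} \subset \Ped{A}$ amounts to producing, for each $b \in \Her{a, A}_+$, a positive element of $\Ped{A}$ dominating $b$ up to a scalar multiple, after which hereditarity finishes the job.

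I would proceed via truncations. For each $\epsilon > 0$, a standard Cuntz-comparison argument places $(b - \epsilon)_+ \in \Her{(a - \delta)_+, A}$ for some $\delta = \delta(\epsilon) > 0$. The element $(a - \delta)_+$ lies in $\Ped{A}_+$ (since $(a-\delta)_+ \le a$, by hereditarity) and has $0$ isolated from the rest of its spectrum, so it admits a continuous local unit $e_\delta \in A_+$ with $e_\delta (a - \delta)_+ = (a - \delta)_+$ and $e_\delta \le \delta^{-1} a$; hereditarity then gives $e_\delta \in \Ped{A}_+$. Since $e_\delta$ acts as an identity on the hereditary subalgebra it generates, $(b - \epsilon)_+ = e_\delta (b - \epsilon)_+ e_\delta \le \|b\| e_\delta^2 \in \Ped{A}_+$, whence $(b - \epsilon)_+ \in \Ped{A}_+$ by hereditarity once more.

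The main obstacle is the passage from the truncations $(b - \epsilon)_+$ to $b$ itself, since $\Ped{A}$ is typically not norm-closed. I would bridge this by packaging the family $(e_\delta)_\delta$ into a single function $g \in C_0(\sigma(a))_+$ satisfying $g(t) \le C t$ (so that $g(a) \in \Ped{A}_+$ by hereditarity), chosen so that $b$ factors as $b = g(a) b' g(a)$ for some $b' \in A$ — a form that places $b$ inside $\Ped{A}$ directly via the two-sided ideal property. Constructing such a $g$ uniformly in $b$ is the crux of the argument; it requires exploiting the interplay between continuous functional calculus in $a$ and the hereditary structure of $\Ped{A}$ to avoid the trap of only landing in $\cl{\Ped{A}} = A$.

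For the boundedness claim, once $\Her{a, A} \subset \Ped{A}$ is established, $\tau$ restricts to a tracial positive linear functional on $\Her{a, A}$ that is finite-valued on all positives. Taking a positive approximate unit $(e_\lambda)$ of $\Her{a, A}$, the trace inequality $\tau(x) \le \|x\| \tau(e_\lambda)$ for $x \in \Her{a, A}_+$ reduces boundedness to $\sup_\lambda \tau(e_\lambda) < \infty$. This supremum is finite because the $e_\lambda$ are uniformly dominated by the single element $g(a) \in \Ped{A}_+$ constructed above, on which $\tau$ takes a finite value; linearly extending to $\Her{a, A}$ then gives the bounded functional claimed.
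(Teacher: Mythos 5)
The paper gives no proof of this lemma at all --- it is quoted verbatim from \cite{Pedbook}, Proposition 5.6.2 --- so your attempt must stand on its own. Your opening move is the right one: $\Ped{A}_+$ is indeed a hereditary cone (immediate from its description as the set of $c\in A_+$ with $c\le\sum_{i=1}^n a_i$ for finitely many $a_i\in A_+$ admitting local units $e_ia_i=a_i$), and the whole lemma does reduce to dominating each $b\in \Her{a, A}_+$ by a scalar multiple of a single element of $\Ped{A}_+$. But the route you take to that dominating element breaks down twice. First, the ``standard Cuntz-comparison'' statement is not that $(b-\epsilon)_+$ \emph{lies in} $\Her{(a-\delta)_+, A}$, only that $(b-\epsilon)_+=zz^\ast$ for some $z$ with $z^\ast z\in \Her{(a-\delta)_+, A}$; membership genuinely fails. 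For example, in $A=C([0,1],M_2)$ take $a(t)=\mathrm{diag}(1,t)$ and $b(t)=p_{11}+tq$ with $q$ the rank-one projection onto $(e_1+e_2)/\sqrt2$: then $b\in \Her{a, A}$, but for small $t>0$ the spectral projection of $b(t)$ for its large eigenvalue is not $p_{11}$, so $(b-1/2)_+$ does not lie in $\Her{(a-\delta)_+, A}=\{f:f(t)\in p_{11}M_2p_{11}\text{ for }t\le\delta\}$ for any $\delta>0$. (This defect is repairable for the limited purpose of putting the truncations into $\Ped{A}_+$, e.g.\ via the ideal property applied to $z(z^\ast z)z^\ast=((b-\epsilon)_+)^2$ together with $(c-\epsilon')_+\le(\epsilon')^{-1}c^2$.)

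The fatal problem is the step you yourself flag as the crux: the factorization $b=g(a)b'g(a)$ with $g\in C_0(\sigma(a))_+$, $g(t)\le Ct$ and $b'\in A$. No such $g$ exists in general. Take $A=C_0(0,1]$, $a=\min((t-1/2)_+,1/4)\in\Ped{A}_+$ and $b=((t-1/2)_+)^{1/2}\in \Her{a, A}=C_0((1/2,1])$: any $g$ with $g(s)\le Cs$ would force $b'(t)=b(t)/g(a(t))^2\ge C^{-2}(t-1/2)^{-3/2}\to\infty$ as $t\downarrow 1/2$, so $b'\notin A$. Thus the bridge from the truncations $(b-\epsilon)_+$ back to $b$ --- exactly where the non-closedness of $\Ped{A}$ bites --- is not supplied, and the strategy you propose for it cannot work. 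Pedersen's actual argument avoids truncations entirely: writing $a\le\sum_{i=1}^n a_i$ with $a_ie_i=a_i$, one may arrange $e_i\in\Ped{A}_+$ (replace $e_i$ by $f(e_i)$ with $f$ compactly supported in $(0,\infty)$ and $f(1)\ge 1$) and observe that $d:=\sum_i e_i$ dominates the support projection $p$ of $a$ in $A^{\ast\ast}$; since $\Her{a, A}\subset pA^{\ast\ast}p$, every $b\in \Her{a, A}_+$ satisfies $b\le\|b\|p\le\|b\|d$, and hereditarity finishes. This also yields the boundedness of $\tau$ at once, via $\tau(b)\le\|b\|\tau(d)$; alternatively, once the inclusion is known, boundedness is automatic because an everywhere-defined positive linear functional on a \Cs-algebra is bounded, so you never need the uniform bound on an approximate unit that the nonexistent $g(a)$ was meant to provide.
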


\subsection{Twisted actions and Packer--Raeburn theorem}
Let $G$ be a locally compact group.
A \emph{twisted action} (in the sense of Busby--Smith\footnote{Our results (Main Theorems \ref{MThm:simple} and \ref{MThm:trace}) are also valid for the reduced twisted products of twisted actions in Green's sense \cite{Gre}. 
This is because analogues of the statements that we used
to reduce to the case of genuine actions also hold true in Green's setting.
See e.g., \cite{Ech} for details.
We employ the Busby--Smith formulation \cite{BS} just for compatibility with the setting in \cite{BK}.})
$(\alpha, u) \colon G \rightarrow A$
is a pair of measurable maps
\[\alpha \colon G \rightarrow \mathrm{ Aut}(A),\quad
u \colon G \times G \rightarrow \cM(A)^{\rm u}\]
satisfying the cocycle relation
\[\alpha_s \circ \alpha_t = \ad(u(s, t)) \circ \alpha_{st},\quad \alpha_r(u(s, t)) u(r, st)= u(r, s) u(rs, t), \quad u(e, s)= u(s, e)=1\]
for all $s, t, r\in G$ (see e.g., Definition 2.1 of \cite{PR} for details).
Analogous to usual actions
(which correspond to the case $u=1$),
each twisted action $(\alpha, u)\colon G \curvearrowright A$ associates a canonical \Cs-algebra $A \rca{\alpha, u} G$,
called the \emph{ reduced twisted crossed product \Cs-algebra} of $(a, u)$.
For a discrete group $\Gamma$, the reduced twisted crossed product
of $(\alpha, u) \colon \Gamma \curvearrowright A$ can be defined as follows.
First take a faithful non-degenerate $\ast$-representation $\pi \colon A \rightarrow \IB(\fH)$.
Define a $\ast$-representation
$\tilde{\pi}_\alpha\colon A \rightarrow \IB(\fH \otimes \ell^2\Gamma)$ and a map $\lambda^u \colon \Gamma \rightarrow \cU(\fH \otimes \ell^2 \Gamma)$ by the formulas
\[\tilde{\pi}_\alpha(a)(\xi \otimes \delta_t) := \pi(\alpha_{t^{-1}}(a))(\xi) \otimes \delta_t,\]
\[\lambda^u_g(\xi \otimes \delta_t):=\pi(u(t^{-1}g^{-1}, g))(\xi) \otimes \delta_{gt}.\]
Then define $A \rca{\alpha, u} \Gamma := \Cso(\tilde{\pi}(A)\cdot \lambda^u(\Gamma))$.
Note that up to canonical isomorphism, the \Cs-algebra $A \rca{\alpha, u}\Gamma$
is independent of the choice of $\pi$.
(Alternatively one can define $A \rca{\alpha, u}\Gamma$ 
as a \Cs-completion of the \emph{twisted algebraic crossed product} $A \rtimes_{\mathrm{alg}, \alpha, u}\Gamma$ without choosing a particular $\pi$.)
Note also that $\lambda^u_g \in \cM(A \rca{\alpha, u} \Gamma)^{\mathrm u}$.

Importance of the twisted actions comes from group extensions.
Let $G$ be a locally compact group given by the (topological) group extension
\[1\rightarrow N \rightarrow G \rightarrow Q \rightarrow 1\]
of locally compact groups with a measurable cross section $Q \rightarrow G$
(which always exists when $Q$ is discrete or $G$ is second countable.)
Then, for any twisted action $(\alpha, u) \colon G \curvearrowright A$,
one can decompose $A\rca{\alpha, u} G$
into the iterated reduced twisted crossed product
$(A \rca{\alpha|_N, u|_N} N) \rca{\beta, v} Q$
for an associated twisted action
$(\beta, v) \colon Q \curvearrowright A \rca{\alpha|_N, u|_N}N$.
Here the appearance of the $2$-cocycle $v$ is essential even when the original $2$-cocycle $u$ is trivial.
For a proof, see Theorem 4.1 of \cite{PR} for instance.
(Note that the proof there is given for the \emph{full} twisted crossed product \Cs-algebras.
However at least when $Q$ is discrete, it is not hard to see that the $\ast$-isomorphism given there
passes to the $\ast$-isomorphism on the reduced \Cs-algebras.
Cf.~ Proposition 8.5 of \cite{Ech} and the comment below it.)

Let
$(\alpha, u) \colon \Gamma \curvearrowright A$ be a twisted action.
Then, analogous to the usual reduced crossed product,
for any subgroup $\Lambda$ of $\Gamma$,
we have a canonical inclusion
\[A \rca{\alpha|_\Lambda, u|_\Lambda} \Lambda \subset A \rca{\alpha, u} \Gamma.\]
(To see this, observe that for each $s\in \Gamma$,
the restrictions of the maps $\tilde{\pi}_\alpha$, $\lambda^u |_\Lambda$ to $\fH \otimes \ell^2(\Lambda s)$ is conjugate to the maps $(\pi\circ \alpha_{s^{-1}})^{\sim}_{\alpha|_\Lambda}$, $\lambda^{u|_\Lambda}$ respectively
by the unitary operator $V \colon \fH \otimes \ell^2 (\Lambda s) \rightarrow \fH \otimes \ell^2 \Lambda$ defined by
$V(\xi \otimes \delta_{ts}):=\pi(u(s^{-1}, t^{-1}))(\xi) \otimes \delta_t$.
This shows that the map $a \lambda^{u|_\Lambda}_g\mapsto a \lambda^u_g$; $a\in A$, $g\in \Lambda$,
extends to an injective $\ast$-homomorphism $A \rca{\alpha|_\Lambda, u|_\Lambda} \Lambda \rightarrow A \rca{\alpha, u} \Gamma$.)
Moreover the inclusion admits a (faithful) canonical conditional expectation
\[E_\Lambda \colon A \rca{\alpha, u} \Gamma \rightarrow A \rca{\alpha|_\Lambda, u|_\Lambda} \Lambda\]
satisfying $E_\Lambda(A \cdot \lambda_g^u)=0$ for all $g\in \Gamma \setminus \Lambda$.
We refer to $E_\Lambda$ as the \emph{subgroup conditional expectation}.
(To see such an $E_\Lambda$ really exists, 
represent $A \rca{\alpha, u} \Gamma$ on $\fH \otimes \ell^2 \Gamma$
as in the definition.
Denote by
$p \colon \fH \otimes \ell^2 \Gamma \rightarrow \fH \otimes \ell^2 \Lambda$
 the orthogonal projection.
Then the map \[\IB(\fH \otimes \ell^2 \Gamma)\ni x\mapsto p x p\in \IB(\fH \otimes \ell^2 \Lambda)\]
restricts to the desired conditional expectation.)

For $\tau \in {\rm T}(A \rca{\alpha, u} \Gamma)$ and a normal subgroup $\Lambda \subset \Gamma$,
we say $\tau$ \emph{factors through} $E_\Lambda$
if it satisfies
$\tau=\tau \circ E_\Lambda$.
Note that for any $\tau \in {\rm T}(A \rca{\alpha|_\Lambda, u|_\Lambda} \Lambda)$
which is invariant under conjugation of $\lambda^u_g$; $g\in \Gamma$,
one has
$\tau \circ E_\Lambda \in {\rm T}(A \rca{\alpha, u} \Gamma)$.

\begin{Def}\label{Def:ee}
Two twisted actions
$(\alpha, u), (\beta, v) \colon \Gamma \curvearrowright A$
are said to be \emph{exterior equivalent}
if there is a
map $w\colon \Gamma \rightarrow \cM(A)^{\mathrm u}$
satisfying
\[\beta_s=\ad(w_s)\circ \alpha_s,\quad v(s, t)=w_s\alpha_s(w_t)u(s, t) w_{st}^\ast.\] 
\end{Def}
It is known that exterior equivalence leads to
a natural $\ast$-isomorphism between the reduced twisted crossed product \Cs-algebras.
Indeed, keep the setting from Definition \ref{Def:ee}
and realize both $A \rca{\alpha, u} \Gamma$ and $ A \rca{\beta, v} \Gamma$ on $\fH \otimes \ell^2\Gamma$ as in the definition by using the same representation $\pi\colon A \rightarrow \IB(\fH)$.
Define $U \in \cU(\fH \otimes \ell^2\Gamma)$
to be
\[U(\xi \otimes \delta_t):=\pi(w_{t^{-1}}) (\xi) \otimes \delta_t.\]
Then we have
\[U(A \rca{\alpha, u} \Gamma) U^\ast = A \rca{\beta, v} \Gamma.\]
Indeed direct calculations show
\[\ad(U) \circ \tilde{\pi}_\alpha=\tilde{\pi}_\beta,\quad
U\tilde{\pi}_\alpha(w_g) \lambda^u_gU^\ast = \lambda^v_g.\]
This fact is observed in Corollary 3.7 in \cite{PR} for the full twisted crossed product \Cs-algebras.
It is clear from these two equalities that the above $\ast$-isomorphism commutes with all subgroup conditional expectations.

The statements of the present article cover the reduced twisted crossed product \Cs-algebras.
However the proofs can be reduced to the untwisted case.
This is due to the following theorem of Packer--Raeburn \cite{PR}.
We emphasis that the theorem involves non-unital \Cs-algebras
even if one is only interested in unital \Cs-algebras.
This is one of the main reasons why we believe it is natural and important to extend the theorems in \cite{BKKO} to the non-unital case.
\begin{Thm}[\cite{PR}, Theorem 3.4]\label{Thm:PR}
Let $(\alpha, u) \colon \Gamma \curvearrowright A$ be a twisted action.
Then the stabilization
$(\alpha \otimes \id_{\IK(\ell^2\Gamma)}, u \otimes \id_{\IK(\ell^2\Gamma)})\colon \Gamma \curvearrowright A \otimes \IK(\ell^2\Gamma)$
is exterior equivalent to a genuine action
$\beta \colon \Gamma \curvearrowright A \otimes \IK(\ell^2\Gamma)$.

Thus there is a $\ast$-isomorphism
\[ (A \rca{\alpha, u} \Gamma) \otimes \IK(\ell^2 \Gamma) \cong (A \otimes \IK(\ell^2 \Gamma)) \rca{\beta} \Gamma\]
which commutes with all subgroup conditional expectations.
\end{Thm}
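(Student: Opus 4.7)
The plan is to follow the Packer--Raeburn strategy of explicitly trivializing the $2$-cocycle $u$ after stabilizing by $\IK(\ell^2\Gamma)$. Since $\Gamma$ is discrete, every construction can be made directly on the spatial level and no measurability issues intervene. Fix a faithful non-degenerate representation $\pi \colon A \to \IB(\fH)$ and view $A \otimes \IK(\ell^2\Gamma)$ concretely on $\fH \otimes \ell^2\Gamma$. Identifying $\cM(A \otimes \IK(\ell^2\Gamma))$ with suitably bounded $A$-valued $\Gamma\times\Gamma$ matrices, I would define, for each $g\in\Gamma$, a unitary multiplier $w_g \in \cU(\cM(A \otimes \IK(\ell^2\Gamma)))$ as a permutation-plus-cocycle operator of the form
\[ w_g(\xi \otimes \delta_t) := \pi(\alpha_t(u(t^{-1},g))^*)\,\xi \otimes \delta_{tg^{-1}}, \]
or a close variant chosen to match the index conventions in the paper's definition of $A\rca{\alpha,u}\Gamma$. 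One checks $w_e=1$ by the normalization $u(e,\cdot)=1$.

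Next I would verify that $(\alpha\otimes\id, u\otimes 1)$ and $(\beta, 1)$ are exterior equivalent in the sense of Definition \ref{Def:ee}, where $\beta_g := \ad(w_g)\circ(\alpha_g\otimes\id)$. The two relations
\[ \beta_g = \ad(w_g)\circ(\alpha_g\otimes\id), \qquad 1 = w_g\,(\alpha_g\otimes\id)(w_h)\,(u(g,h)\otimes 1)\,w_{gh}^\ast \]
are confirmed by direct matrix calculation; the second reduces to the cocycle identity $\alpha_r(u(s,t))\,u(r,st) = u(r,s)\,u(rs,t)$, applied once to each basis vector $\xi\otimes\delta_t$. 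In particular $\beta\colon \Gamma\curvearrowright A \otimes \IK(\ell^2\Gamma)$ is a genuine action, giving the first assertion.

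For the second assertion, I would chain together two natural $\ast$-iso\-morph\-isms. The paragraph following Definition \ref{Def:ee} supplies a canonical $\ast$-isomorphism
\[ (A\otimes\IK(\ell^2\Gamma)) \rca{\alpha\otimes\id, u\otimes 1} \Gamma \;\cong\; (A\otimes\IK(\ell^2\Gamma)) \rca{\beta} \Gamma \]
implemented by the unitary built from $(w_g)_{g\in\Gamma}$, and by construction this $\ast$-isomorphism commutes with all subgroup conditional expectations because $w_g$ acts diagonally in the $\ell^2\Gamma$-coordinate of the regular representation (it only shifts within $\Gamma$-cosets in a way that respects the decomposition $\ell^2\Gamma = \bigoplus_{c\in\Gamma/\Lambda}\ell^2(\Lambda c)$ used to define $E_\Lambda$ in the preliminaries). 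Combining this with the standard spatial identification $(A\otimes\IK(\ell^2\Gamma))\rca{\alpha\otimes\id, u\otimes 1}\Gamma \cong (A\rca{\alpha,u}\Gamma)\otimes\IK(\ell^2\Gamma)$, obtained by rearranging the representation on $\fH\otimes\ell^2\Gamma\otimes\ell^2\Gamma$ so that $\IK(\ell^2\Gamma)$ and the crossed product commute, yields the desired $\ast$-isomorphism, with subgroup expectation compatibility preserved throughout.

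The main obstacle I expect is purely bookkeeping: selecting the precise formula for $w_g$ so that the indices align with the Busby--Smith convention fixed in Section \ref{S:Pre} (in particular the placement of $u(t^{-1}g^{-1},g)$ inside $\lambda^u_g$), and checking that the compressions implementing the various subgroup expectations commute with the implementing unitaries on the nose rather than only up to an inner automorphism. Neither step is conceptually deep, but both demand care because a sign-free analogue of the cocycle identity must be used at each line.
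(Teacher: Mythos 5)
Your proposal is correct and follows essentially the route the paper itself takes: the paper offers no proof of the exterior equivalence, citing Packer--Raeburn for it, and obtains the displayed $\ast$-isomorphism and its compatibility with subgroup conditional expectations exactly as you do, from the paragraph following Definition \ref{Def:ee} together with the obvious flip $(A\otimes\IK(\ell^2\Gamma))\rca{\alpha\otimes\id,\, u\otimes 1}\Gamma\cong (A\rca{\alpha,u}\Gamma)\otimes\IK(\ell^2\Gamma)$. The only adjustment needed is the bookkeeping you already anticipate: with the paper's left-shift convention for $\lambda^u$, the choice $w_g(\xi\otimes\delta_t)=\pi(u(g,t)^\ast)\xi\otimes\delta_{gt}$ makes the relation $w_{gh}=w_g\,(\alpha_g\otimes\id)(w_h)\,(u(g,h)\otimes 1)$ an instance of the cocycle identity, which is cleaner than your displayed right-shift variant.
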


\begin{Rem}\label{Rem:el}
It is easy to see that
the sets $\cI(\alpha, \tau)$ in Definition \ref{Def:cI}
are unchanged by exterior equivalence.
Thus, for a twisted action $(\alpha, u) \colon \Gamma \curvearrowright A$,
let $\beta \colon \Gamma \curvearrowright A \otimes \IK(\ell^2 \Gamma)$
be the action given in Theorem \ref{Thm:PR}.
Then for any $\tau\in {\rm T}(A)$, we have
\[\cI(\alpha, \tau) \cdot (\Ped{\IK(\ell^2\Gamma)}_+)_1 \subset \cI(\beta, \tau \otimes \mathrm{Tr}).\]
Here $\mathrm{Tr}$ denotes the canonical (proper) tracial weight on $\IK(\ell^2 \Gamma)$.
In particular, when $A$ is unital or more generally when
$\cI(\alpha, \tau)$ is non-degenerate, the set $\cI(\beta, \tau \otimes \mathrm{Tr})$ is non-degenerate.
 \end{Rem}
\subsection{Boundary actions, \Cs-simplicity, and amenable radical}
Recall that a locally compact group $G$ is \emph{\Cs-simple}
if its reduced group \Cs-algebra $\rg(G)$ is simple.

Recall that a (continuous) action $G\curvearrowright X$ on a (non-empty) compact space
is said to be a \emph{boundary action}
if any $G$-invariant weak-$\ast$ closed non-empty subset of the state space of $C(X)$ contains all characters on $C(X)$.
A (topological) \emph{$G$-boundary} is a compact space $X$ equipped with a boundary action $G \curvearrowright X$.

Kalantar--Kennedy \cite{KK} discovered the following unexpected deep connection between \Cs-simplicity and boundary actions of discrete groups.
\begin{Thm}[see \cite{KK}, Theorem 1.5]
A discrete group $\Gamma$ is \Cs-simple if and only if
there is a $($topologically$)$ free $\Gamma$-boundary.
\end{Thm}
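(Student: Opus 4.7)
The approach is via Hamana's $\Gamma$-injective envelope of $\IC$, realised as $C(\partial_F\Gamma)$ for the Furstenberg boundary $\partial_F\Gamma$, the universal $\Gamma$-boundary: every $\Gamma$-boundary $X$ admits a continuous $\Gamma$-equivariant surjection $\partial_F\Gamma\to X$, and $C(\partial_F\Gamma)$ is characterised by the property that any $\Gamma$-equivariant inclusion into a unital $\Gamma$-\Cs-algebra admits a $\Gamma$-equivariant u.c.p.\ retraction back to $C(\partial_F\Gamma)$. My first move would be to reduce the statement to the stronger biconditional: $\Gamma$ is \Cs-simple iff $\Gamma\curvearrowright \partial_F\Gamma$ is topologically free. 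The non-trivial half of this reduction is that existence of any topologically free $\Gamma$-boundary $X$ forces $\Gamma \curvearrowright \partial_F\Gamma$ to be topologically free: if $g\neq e$ fixed a non-empty open $U\subset \partial_F\Gamma$, one would exploit the equivariant embedding $C(X)\subset C(\partial_F\Gamma)$ and a $\Gamma$-equivariant u.c.p.\ retraction supplied by injectivity to transfer the non-trivial fixed behaviour back to $X$, contradicting its topological freeness.

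For the direction ``topological freeness of $\Gamma \curvearrowright\partial_F\Gamma$ implies \Cs-simplicity'', my plan is to invoke the canonical unital embedding
\[\rg(\Gamma)\hookrightarrow C(\partial_F\Gamma)\rc \Gamma\]
arising from the scalar inclusion $\IC\subset C(\partial_F\Gamma)$, and then to apply the Archbold--Spielberg theorem, which yields simplicity of $C(\partial_F\Gamma)\rc \Gamma$ from topologically free minimality on a compact Hausdorff space. To descend simplicity to $\rg(\Gamma)$, I would use $\Gamma$-injectivity of $C(\partial_F\Gamma)$ to extend the scalar inclusion to a $\Gamma$-equivariant u.c.p.\ map $\Psi\colon \IB(\ell^2\Gamma)\to C(\partial_F\Gamma)$. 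Assembling $\Psi$ with the reduced crossed product structure furnishes a $\Gamma$-equivariant u.c.p.\ map $C(\partial_F\Gamma)\rc\Gamma \to C(\partial_F\Gamma)\rc \Gamma$ whose restriction to $\rg(\Gamma)$ is (multiplication of) the canonical trace; the faithfulness of this map together with simplicity of the ambient algebra then forces any non-zero ideal of $\rg(\Gamma)$ to generate all of $\rg(\Gamma)$ itself.

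For the opposite direction I argue contrapositively: if some $g\neq e$ fixes a non-empty open set in $\partial_F\Gamma$, then the existence of this fixed region allows one to construct, through $\Psi$ and a choice of state concentrated on the fixed locus, a $\Gamma$-equivariant u.c.p.\ map $\rg(\Gamma)\to C(\partial_F\Gamma)$ that is distinct from $\tau(\cdot)\cdot 1$. This contradicts the Kalantar--Kennedy characterisation of \Cs-simplicity in terms of uniqueness of such equivariant maps, and equivalently produces a non-trivial closed ideal in $\rg(\Gamma)$ via a Powers-style averaging argument. I expect the hardest step to be the ``if'' direction, specifically the explicit construction of the conditional-expectation-type map on the reduced crossed product from $\Psi$ and the verification that it detects ideals faithfully; the remaining ingredients---Hamana's $\Gamma$-injective envelope, Archbold--Spielberg simplicity, and the Powers-averaging characterisation of \Cs-simplicity---are standard tools once assembled.
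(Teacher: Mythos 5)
The paper does not prove this statement at all: it is imported wholesale from \cite{KK} (Theorem 1.5), so the only yardstick is the published Kalantar--Kennedy/Breuillard--Kalantar--Kennedy--Ozawa argument. Your outline assembles the right objects --- $C(\partial_F\Gamma)$ as the $\Gamma$-injective envelope of $\IC$, the inclusion $\rg(\Gamma)\subset C(\partial_F\Gamma)\rc\Gamma$, simplicity of the ambient crossed product from topological freeness plus minimality, and the reduction to $\partial_F\Gamma$ via essentiality of $C(X)\subset C(\partial_F\Gamma)$ --- but the two steps that carry the actual content do not work as you describe them.

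For the descent of simplicity from $C(\partial_F\Gamma)\rc\Gamma$ to $\rg(\Gamma)$: a $\Gamma$-equivariant unital completely positive map whose restriction to $\rg(\Gamma)$ is $\tau(\cdot)1$ cannot detect ideals of $\rg(\Gamma)$. If $I$ is a nonzero proper ideal of $\rg(\Gamma)$, simplicity upstairs only tells you that $I$ generates all of $C(\partial_F\Gamma)\rc\Gamma$; there is no conditional expectation back onto $\rg(\Gamma)$, completely positive maps do not respect ideal generation, and a map that collapses $\rg(\Gamma)$ to scalars yields no contradiction. The argument that works runs the other way: given a proper ideal $I$, represent $\rg(\Gamma)/I$, extend by Arveson to $C(\partial_F\Gamma)\rc\Gamma$, and use boundary rigidity (the unique equivariant unital completely positive map from the image of $C(\partial_F\Gamma)$ back into $C(\partial_F\Gamma)$ is a $\ast$-homomorphism) to produce a representation annihilating the ideal generated by $I$; thus $I$ generates a \emph{proper} ideal upstairs, contradicting simplicity unless $I=0$. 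This is exactly Lemma 7.2 of \cite{BKKO}, whose non-unital generalization is the whole point of Section \ref{S:simple} of the present paper. For the converse direction, ``a state concentrated on the fixed locus'' plus ``Powers-style averaging'' is not an argument: the known proof that \Cs-simplicity forces topological freeness of $\Gamma\curvearrowright\partial_F\Gamma$ passes through the amenability of point stabilizers $\Gamma_x$ and the weak containment of the quasi-regular representation $\ell^2(\Gamma/\Gamma_x)$ in $\lambda_\Gamma$, a genuinely different mechanism. Moreover, Le Boudec's examples \cite{Bou} --- groups whose Furstenberg boundary action is faithful but not topologically free, yet which have the unique trace property --- show that no naive passage from a fixed open set to a non-canonical trace, state, or ideal can succeed; the step you defer as ``the hardest'' is precisely where the theorem lives.
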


The article \cite{BKKO} establishes another deep connection
of boundary actions with the unique trace property of
discrete groups.

Recall that for a discrete group $\Gamma$,
the \emph{amenable radical} $R_a(\Gamma)$ of $\Gamma$
is the largest amenable normal subgroup in $\Gamma$.
\begin{Thm}[\cite{BKKO}, Theorem 1.3, Proposition 2.8, \cite{Fur}]\label{Thm:BKKO}
For a discrete group $\Gamma$,
the following conditions are equivalent.
\begin{enumerate}[\upshape(1)]
\item The group $\Gamma$ has the unique trace property.
\item The group $\Gamma$ admits a faithful $\Gamma$-boundary.
\item The amenable radical $R_a(\Gamma)$ is trivial.
\end{enumerate}
\end{Thm}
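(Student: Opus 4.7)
The plan is to prove the equivalences via the cycle $(1) \Rightarrow (3) \Rightarrow (2) \Rightarrow (1)$.

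For $(1) \Rightarrow (3)$, I would argue contrapositively: if $N := R_a(\Gamma) \neq \{e\}$, then amenability of $N$ places its trivial representation weakly inside the left regular representation, yielding a character $\bar\epsilon \colon \rg(N) \to \IC$. Composing with the subgroup conditional expectation $E_N \colon \rg(\Gamma) \to \rg(N)$ recalled in the preliminaries gives a state $\tau_N := \bar\epsilon \circ E_N$, which is tracial because normality of $N$ ensures $gh \in N \iff hg \in N$. Since $\tau_N(\lambda_n) = 1 \neq 0 = \tau_0(\lambda_n)$ for any $n \in N \setminus \{e\}$ (where $\tau_0$ denotes the canonical trace), this violates the unique trace property.

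For $(3) \Rightarrow (2)$, I would appeal to Furman's theorem identifying the kernel of the canonical action $\Gamma \curvearrowright \partial_F \Gamma$ on the Furstenberg boundary with $R_a(\Gamma)$. If $R_a(\Gamma) = \{e\}$ this kernel is trivial, so $\partial_F \Gamma$ itself is a faithful $\Gamma$-boundary. The core input here is that any normal subgroup acting trivially on $\partial_F\Gamma$ must be amenable, which can be extracted from the injectivity of $C(\partial_F\Gamma)$ as a $\Gamma$-operator system.

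For $(2) \Rightarrow (1)$, the deepest step, the goal is to show that any tracial state $\tau$ on $\rg(\Gamma)$ satisfies $\tau(\lambda_g) = 0$ for all $g \neq e$, forcing $\tau = \tau_0$. The plan is to establish a Powers-type averaging: given $g \neq e$ and $\epsilon > 0$, produce $g_1, \ldots, g_n \in \Gamma$ with
\[\Big\|\frac{1}{n}\sum_{i=1}^n \lambda_{g_i g g_i^{-1}}\Big\| < \epsilon.\]
Traciality then gives $|\tau(\lambda_g)| \leq \epsilon$, and letting $\epsilon \to 0$ finishes the argument. The main obstacle is precisely establishing this averaging property. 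The strategy is to canonically embed $C(X)$ into $\rg(\Gamma)$ as a $\Gamma$-equivariant operator system (using injectivity of $C(\partial_F\Gamma)$ to extend a $\Gamma$-equivariant u.c.p.\ map $C(X) \to \rg(\Gamma)\sd$), and then exploit the defining boundary property together with faithfulness of the action to squeeze convex combinations of $\Gamma$-conjugates of $\lambda_g$ to arbitrarily small norm.
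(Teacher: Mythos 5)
The paper states this theorem as a citation (to \cite{BKKO} and \cite{Fur}) and contains no proof of it, so there is no internal argument to compare against; the closest thing is that Main Theorem \ref{MThm:trace} specialised to $A=\IC$ reproves the implication $(3)\Rightarrow(1)$. Your implications $(1)\Rightarrow(3)$ and $(3)\Rightarrow(2)$ are correctly sketched and standard: for a nontrivial amenable normal $N$, the state $\bar\epsilon\circ E_N$ is tracial because $gh\in N\iff hg\in N$, and it differs from the canonical trace on $\lambda_n$, $n\in N\setminus\{e\}$; and Furman's identification $\ker(\Gamma\curvearrowright\partial_F\Gamma)=R_a(\Gamma)$ (quoted in the paper) gives $(3)\Rightarrow(2)$ immediately.

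The genuine gap is in $(2)\Rightarrow(1)$, on two counts. First, the mechanism you propose cannot exist: a $\Gamma$-equivariant u.c.p.\ map $\Phi\colon C(X)\to \rg(\Gamma)\sd$ (with $\Gamma$ acting on the target by $\ad(\lambda_g)$) would, upon composing with the canonical trace $\tau_0$ (which is conjugation-invariant), yield a $\Gamma$-invariant probability measure $\mu$ on $X$; but then $\{\mu\}$ is a $\Gamma$-invariant weak-$\ast$ closed non-empty subset of the state space of $C(X)$, so by the boundary property it contains every point mass, forcing $X$ to be a single point and contradicting faithfulness for $\Gamma\neq\{e\}$. Injectivity of $C(\partial_F\Gamma)$ produces equivariant u.c.p.\ maps \emph{into} $C(\partial_F\Gamma)$, not out of $C(X)$ into $\rg(\Gamma)\sd$. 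Second, even setting that aside, the whole content of this implication is the estimate $\bigl\|\frac{1}{n}\sum_i\lambda_{g_igg_i^{-1}}\bigr\|<\epsilon$, and your sketch says nothing about how the boundary dynamics produce it; in Haagerup's approach this requires a genuine ping-pong construction (conjugates whose ``supports'' on the boundary are made pairwise disjoint via strong proximality, followed by a Powers-type $O(1/\sqrt{n})$ norm bound). A repair that stays close to this paper is the route of \cite{BKKO}, which avoids averaging: extend a trace $\tau$ to a state $\varphi$ on $C(\partial_F\Gamma)\rc\Gamma$; the set of restrictions $\varphi|_{C(\partial_F\Gamma)}$ over all positive extensions is non-empty, weak-$\ast$ compact, convex and $\Gamma$-invariant (by traciality), hence contains every point mass; choosing $\omega=\delta_x$ and $f$ with $\omega(f)=1$, $\omega(\beta_g(f))=0$ (possible whenever $g\notin R_a(\Gamma)$), the multiplicative domain argument gives $\tau(\lambda_g)=\varphi(\lambda_g f)=\varphi(\beta_g(f)\lambda_g)=\omega(\beta_g(f))\tau(\lambda_g)=0$. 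This is exactly the unital, $A=\IC$ instance of Lemmas \ref{Lem:bilinear} and \ref{Lem:extension} in the paper.
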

We also refer the reader to \cite{Haa} and \cite{Ken}
for alternative approaches to these theorems and further characterizations of
\Cs-simplicity and the unique trace property for discrete groups.

The above two theorems show that \Cs-simplicity implies the unique trace property for discrete groups.
The converse is not true as counterexamples are constructed by Le Boudec \cite{Bou}.

Among boundary actions of a discrete group $\Gamma$, there is a universal action.
The universal boundary action $\beta \colon \Gamma \curvearrowright \partial_F \Gamma$
is referred to as the \emph{Furstenberg boundary} of $\Gamma$.
Furman \cite{Fur} has shown the equality
\[\ker(\beta)=R_a(\Gamma).\]
(See also \cite{BKKO}, Proposition 2.8 for a short proof.)
The Furstenberg boundary has the following remarkable property,
which is a key ingredient of the simplicity theorem.
\begin{Thm}[\cite{KK}, Theorem 3.12, \cite{Ham85}]\label{Thm:Fur}
Any unital $\Gamma$-\Cs-algebra $C$ admits a $\Gamma$-equivariant unital completely positive map
\[C \rightarrow C(\partial_F \Gamma).\]
Moreover in the case of $C=C(X)$ where $X$ is a $\Gamma$-boundary,
such a map is unique and is an injective $\ast$-homomorphism.
\end{Thm}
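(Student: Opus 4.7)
The plan is to derive both assertions from Hamana's theory of $\Gamma$-injective envelopes, combined with the identification in \cite{KK} of $C(\partial_F\Gamma)$ as the $\Gamma$-injective envelope of $\IC$ in the category of unital $\Gamma$-\Cs-algebras with $\Gamma$-equivariant unital completely positive (ucp) maps. Granted this, the first assertion is immediate from $\Gamma$-injectivity: the unital embedding $\IC \to C(\partial_F \Gamma)$ is trivially $\Gamma$-equivariant and ucp, and applying the equivariant extension property to the inclusion $\IC \hookrightarrow C$ produces the required map $C \to C(\partial_F\Gamma)$.

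Now let $X$ be a $\Gamma$-boundary. By the universal property of $\partial_F \Gamma$ among $\Gamma$-boundaries, there is a $\Gamma$-equivariant continuous surjection $\pi \colon \partial_F\Gamma \to X$, giving an injective $\Gamma$-equivariant $\ast$-homomorphism $\pi^\ast \colon C(X) \hookrightarrow C(\partial_F\Gamma)$; this already exhibits a map with the asserted additional structure, so the real content of the second assertion is uniqueness. For an arbitrary $\Gamma$-equivariant ucp map $\phi \colon C(X) \to C(\partial_F\Gamma)$, the fact that $\pi^\ast$ is an isomorphism of $\Gamma$-\Cs-algebras onto its image makes $\phi \circ (\pi^\ast)^{-1}$ a $\Gamma$-equivariant ucp map from the $\Gamma$-invariant \Cs-subalgebra $\pi^\ast(C(X)) \subset C(\partial_F\Gamma)$ into $C(\partial_F\Gamma)$. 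By $\Gamma$-injectivity this extends to a $\Gamma$-equivariant ucp self-map $\Psi \colon C(\partial_F\Gamma) \to C(\partial_F\Gamma)$, and if one shows $\Psi = \id$ then $\phi = \pi^\ast$, proving uniqueness.

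The main obstacle is therefore the rigidity statement that every $\Gamma$-equivariant ucp self-map of $C(\partial_F\Gamma)$ is the identity. In Hamana's framework \cite{Ham85} this is precisely the rigidity/essentiality of the $\Gamma$-injective envelope, and can be quoted directly. Concretely for the Furstenberg boundary one can argue it as follows: the dual measure-valued map $y \mapsto \mu_y := \Psi^\ast(\delta_y)$ is $\Gamma$-equivariant and continuous from $\partial_F\Gamma$ into the state space of $C(\partial_F\Gamma)$, and combining (i) strong proximality of $\Gamma \curvearrowright \partial_F \Gamma$, so that the weak-$\ast$ closure of every orbit $\Gamma \cdot \mu_y$ contains a point mass, (ii) minimality of the action, and (iii) the defining property of a boundary action (every $\Gamma$-invariant weak-$\ast$ closed subset of the state space of $C(\partial_F\Gamma)$ contains all characters), one forces $\mu_y = \delta_y$ for every $y$, hence $\Psi = \id$. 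This completes the proof.
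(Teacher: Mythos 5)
The paper offers no proof of this statement---it is quoted with citations to \cite{KK} (Theorem 3.12) and \cite{Ham85}---and your argument is precisely the standard derivation from those sources ($\Gamma$-injectivity of $C(\partial_F\Gamma)=I_\Gamma(\IC)$ for existence, the universal surjection $\partial_F\Gamma\to X$ plus rigidity of the $\Gamma$-injective envelope for uniqueness), so it is correct and matches the intended route. One caveat on your supplementary ``concrete'' rigidity sketch: strong proximality plus minimality applied to $y\mapsto\mu_y=\Psi^\ast(\delta_y)$ only shows the ($\Gamma$-invariant, closed) image contains all point masses, not that $\mu_y=\delta_y$; the standard fix is to run the proximality argument on the dual of $\tfrac{1}{2}(\Psi+\id)$, where $\tfrac{1}{2}(\mu_w+\delta_w)$ being a point mass forces $\mu_w=\delta_w$, after which minimality and continuity give $\Psi=\id$---but since you also invoke Hamana's rigidity directly, the proof stands.
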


\section{Simplicity of reduced twisted crossed products}\label{S:simple}
In this section, we give a proof of Main Theorem \ref{MThm:simple}.
The next lemma follows from the proof of Lemma A.2 of \cite{SuzMA}.
\begin{Lem}\label{Lem:cf}
Let $a\in (A_+)_1$ be a completely full element.
Then there is a net $((v_{j, n})_{n=1}^{\infty})_{j\in J}$ of sequences in $A$
satisfying the following conditions.
\[\sum_n v_{j, n}v_{j, n}^\ast \leq 1,\quad
\stlim\lim_{j\in J}\left(\sum_n v_{j, n}a v_{j, n}^\ast\right) = 1.\]
\end{Lem}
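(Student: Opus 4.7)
The plan is to reduce the required strict convergence to a finite-step norm approximation for arbitrary positive contractions, and then exploit complete fullness of $a$ through a Cuntz--R\o rdam type absorption lemma. Specifically, it suffices to show that for every $b \in (A_+)_1$ and every $\epsilon > 0$ there exist $v_1, \ldots, v_N \in A$ with $\sum_n v_n v_n^\ast \leq 1$ and $\|\sum_n v_n a v_n^\ast - b\| < \epsilon$. Granting this, the required net is indexed by pairs $(b, \epsilon)$ with $b$ running over an approximate unit of $A$ and $\epsilon \to 0$; the corresponding sums lie in the positive unit ball of $A$ and norm-approximate the approximate unit, so they strict-converge to $1 \in \cM(A)$.

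For fixed $b$ and $\epsilon$, I would choose a small $\delta > 0$ (with $\delta \ll \epsilon$) and pick $f \in C_c(0,1]_+$ with $\|f\| \leq 1$, $f(1) > 0$, and $\mathrm{supp}(f) \subset [1-\delta, 1]$. By complete fullness, $f(a)$ is full in $A$. The standard Cuntz--R\o rdam absorption lemma (a full positive element absorbs every positive element in the Cuntz sense) then produces, for any $\eta > 0$, finitely many $x_1, \ldots, x_N \in A$ with
\[\Big\| \sum_{i=1}^N x_i f(a) x_i^\ast - (1-2\delta) b \Big\| < \eta.\]
Set $g(t) := \sqrt{f(t)/t}$, continuously extended by $g(0) = 0$; this is well-defined since $f$ vanishes in a neighborhood of $0$. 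Functional calculus gives $f(a) = g(a) a g(a)$, so with $v_i := x_i g(a)$ we obtain $\sum_i v_i a v_i^\ast = \sum_i x_i f(a) x_i^\ast$, which is within $\eta + 2\delta$ of $b$ in norm.

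The main technical obstacle is the bound $\sum_i v_i v_i^\ast \leq 1$. Here we exploit $\mathrm{supp}(f) \subset [1-\delta, 1]$: on this set $t \geq 1-\delta$, so the pointwise inequality $t g(t)^2 \geq (1-\delta) g(t)^2$ holds on the whole spectrum of $a$ (both sides vanish outside $\mathrm{supp}(f)$). Functional calculus then yields $f(a) = a g(a)^2 \geq (1-\delta) g(a)^2$. Sandwiching by $x_i$ and summing gives
\[(1-\delta) \sum_i v_i v_i^\ast = (1-\delta) \sum_i x_i g(a)^2 x_i^\ast \leq \sum_i x_i f(a) x_i^\ast,\]
whose norm is at most $(1-2\delta)\|b\| + \eta \leq 1-\delta$ once $\eta \leq \delta$. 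This forces $\sum_i v_i v_i^\ast \leq 1$. The delicate point, and the heart of the argument, is precisely the balancing of $\delta$ and $\eta$ so that the norm approximation $\|\sum v_i a v_i^\ast - b\| < \epsilon$ and the bound on $\sum v_i v_i^\ast$ hold simultaneously; this balancing is essentially the content of the proof of Lemma A.2 of \cite{SuzMA}, on which the argument is modelled.
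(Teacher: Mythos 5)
Your argument is correct: the reduction to norm-approximating an approximate unit, the absorption of a positive contraction by the full element $f(a)$ via the standard lemma, and the normalization trick using $\supp(f)\subset[1-\delta,1]$ to force $\sum_i v_i v_i^\ast\le 1$ all go through, and the balancing of $\delta$ and $\eta$ is sound. This is essentially the same route as the proof the paper invokes (namely that of Lemma A.2 of \cite{SuzMA}), which you have reconstructed in detail.
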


For the simplicity theorem,
the next lemma is the crucial step.
\begin{Lem}[see Lemma 7.2 of \cite{BKKO} for the unital case]
Let $A$ be a $\Gamma$-\Cs-algebra with a completely full element.
Let $I$ be a proper ideal of $A \rc \Gamma$.
Let $X$ be a $\Gamma$-boundary.
Then $I$ generates a proper ideal in $(A \otimes C(X)) \rc \Gamma$.
\end{Lem}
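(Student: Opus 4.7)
The plan is to show, contrapositively, that any nondegenerate $\ast$-representation $\pi\colon A\rc\Gamma\to\IB(\fH)$ with $\ker\pi=I$ extends to a $\ast$-representation of $(A\otimes C(X))\rc\Gamma$ on $\fH$; its kernel will then be a proper ideal containing $I$. By non-degeneracy, $\pi$ extends uniquely to a unital $\ast$-homomorphism $\tilde\pi\colon\cM(A\rc\Gamma)\to\IB(\fH)$, and the unitaries $U_g:=\tilde\pi(\lambda_g)$ satisfy $U_g\pi(a)U_g^\ast=\pi(\alpha_g(a))$. Applying Arveson's extension theorem (on unitizations, then passing to the multiplier algebra via strict continuity on the non-degenerate image) yields a unital UCP extension
\[
\Phi\colon\cM((A\otimes C(X))\rc\Gamma)\to\IB(\fH),\qquad \Phi|_{\cM(A\rc\Gamma)}=\tilde\pi.
\]
In particular the multiplicative domain $M_\Phi$ contains $\cM(A\rc\Gamma)$, so $\Phi$ is an $A\rc\Gamma$-bimodule map.

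Next I analyse the map $\psi\colon C(X)\to\IB(\fH)$ defined by $\psi(f):=\Phi(1\otimes f)$, using the inclusion $1\otimes C(X)\subset\cM(A\otimes C(X))\subset\cM((A\otimes C(X))\rc\Gamma)$. Because $1\otimes f$ commutes with $A\otimes 1$ and $\Phi$ is $A$-bimodular, $\psi(C(X))\subset\pi(A)'$. The identity $\lambda_g(1\otimes f)\lambda_g^{-1}=1\otimes(g\cdot f)$ combined with bimodularity in $\lambda_g$ gives $\psi(g\cdot f)=U_g\psi(f)U_g^\ast$, so $\psi$ is $\Gamma$-equivariant when $\pi(A)'$ carries the $\ad(U_g)$-action. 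Once $\psi$ is shown to be multiplicative, $M_\Phi$ will contain $A$, $1\otimes C(X)$ and every $\lambda_g$, hence all of $(A\otimes C(X))\rc\Gamma$; then $\Phi$ itself is a $\ast$-representation of $(A\otimes C(X))\rc\Gamma$ extending $\pi$, and its kernel is the desired proper ideal containing $I$.

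The main obstacle is proving that the $\Gamma$-equivariant UCP map $\psi$ is a $\ast$-homomorphism. In the unital case of \cite{BKKO} this is done by composing with the $\Gamma$-equivariant embedding $C(X)\hookrightarrow C(\partial_F\Gamma)$ supplied by Theorem~\ref{Thm:Fur} and invoking Hamana-type rigidity of the Furstenberg boundary. In the non-unital setting the completely full element $a\in A$ will play the role of the unit: letting $d:=\psi(f^\ast f)-\psi(f)^\ast\psi(f)\in\pi(A)'_+$ denote the Kadison--Schwarz defect, I would combine the boundary-theoretic argument with an averaging over the sequences $(v_{j,n})$ from Lemma~\ref{Lem:cf} (together with strict continuity of $\Phi$ on $\cM((A\otimes C(X))\rc\Gamma)$) to conclude $\pi(a)d=0$. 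Fullness of $a$ in $A$, combined with the commutation $d\in\pi(A)'$, then forces $d\cdot\pi(A)\fH=0$; non-degeneracy of $\pi$ gives $d=0$, so $\psi$ is multiplicative.

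The most delicate technical point will be this last averaging step, where one has to exchange applications of the UCP map $\Phi$ with strict limits of the form $\stlim_j\sum_n (v_{j,n}\otimes 1)(a\otimes f^\ast f)(v_{j,n}\otimes 1)^\ast$, track which compressions of the Kadison--Schwarz defect are annihilated, and transfer the unit-based identities in the BKKO argument to identities valid only after averaging against the bounded row sequences of Lemma~\ref{Lem:cf}. Once these computations are in place, the conclusion is immediate.
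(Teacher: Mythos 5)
Your setup agrees with the paper's up to the point where the map $\psi(f)=\Phi(1\otimes f)$ must be handled: both arguments take an Arveson extension of $\pi$, note that $A\rc\Gamma$ lies in its multiplicative domain, that $\psi(C(X))\subset\pi(A)'$ carries the conjugation $\Gamma$-action, and that the completely full element enters through Lemma \ref{Lem:cf}. The gap is in your key step: you aim to prove that $\psi$ itself is multiplicative, i.e.\ that the Kadison--Schwarz defect $d=\psi(f^\ast f)-\psi(f)^\ast\psi(f)$ vanishes. What Theorem \ref{Thm:Fur} actually provides is a $\Gamma$-equivariant unital completely positive map $\rho$ from $C:=\Cso(\psi(C(X)))$ to $C(\partial_F\Gamma)$ such that the \emph{composite} $\rho\circ\psi$ is the unique (injective) $\ast$-homomorphism; the Schwarz inequality then forces only $\rho(d)=0$, i.e.\ $d\in K:=\ker(\rho)$, not $d=0$. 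No boundary rigidity applies to $\psi$ itself: the Arveson extension is highly non-unique, $C$ need not even be commutative, and in general one cannot expect every representation of $A\rc\Gamma$ to extend to a representation of $(A\otimes C(X))\rc\Gamma$ on the same Hilbert space. Your proposed rescue via the sequences $(v_{j,n})$ cannot close this: what that averaging proves (and what the paper uses it for) is that $c\mapsto\pi(a)c$ is \emph{isometric} on $C$, so the identity $\pi(a)d=0$ you are after is equivalent to $d=0$ rather than a stepping stone towards it.

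The paper sidesteps the problem by not asking $\tilde\pi$ to be a representation on $\fH$. It sets $D:=\Cso(\tilde{\pi}((A\otimes C(X))\rc\Gamma))$ and $L:=\cl{K\cdot D}$, an ideal of $D$; modulo $L$ the defect (which lies in $K$) dies, so the composite of $\tilde\pi$ with the quotient $D\rightarrow D/L$ becomes a non-degenerate $\ast$-homomorphism of $(A\otimes C(X))\rc\Gamma$ vanishing on $I$, which is all the lemma needs. The completely full element is used exactly once, to show $L\neq D$: the isometry of $j_a\colon c\mapsto\pi(a)c$, combined with the criterion that $x\in L$ if and only if $e_i x\rightarrow x$ for an approximate unit $(e_i)$ of $K$, gives $j_a^{-1}(L)=K\neq C$. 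If you replace your ``show $d=0$'' step by this quotient construction, the remainder of your outline goes through.
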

\begin{proof}
Fix a non-degenerate $\ast$-representation
$\pi \colon A \rc \Gamma \rightarrow \IB(\fH)$ with
$\ker(\pi)=I$.
By the Arveson extension theorem,
one can take a completely positive extension $\tilde{\pi} \colon (A \otimes C(X)) \rc \Gamma \rightarrow \IB(\fH)$ of 
$\pi$.
Note that $A \rc \Gamma$ sits in the multiplicative domain of $\tilde{\pi}$.
We use the same symbol $\tilde{\pi}$ to denote the strictly continuous extension of $\tilde{\pi}$ on $\cM((A \otimes C(X)) \rc \Gamma)$.
(See Corollary 5.7 of \cite{Lan} for the existence of such an extension.)
Let $C$ be the \Cs-algebra on $\fH$ generated by $\tilde{\pi}(C(X))$.
Note that $C$ contains $\id_{\fH}$.
We equip $C$ with the $\Gamma$-action by conjugation.
Note that $\tilde{\pi}|_{C(X)} \colon C(X) \rightarrow C$ is $\Gamma$-equivariant.
By Theorem \ref{Thm:Fur}, one has a $\Gamma$-equivariant unital completely positive map $\psi\colon C \rightarrow C(\partial_F \Gamma)$.
Moreover, since $X$ is a $\Gamma$-boundary,
the composite $\psi \circ \tilde{\pi}|_{C(X)}$ is an (injective) $\ast$-homomorphism.
This shows that $\tilde{\pi}(C(X)^{\mathrm u})$ sits in the multiplicative domain of $\psi$.
Hence $\psi$ is in fact a $\ast$-homomorphism.
Put $K:= \ker(\psi) \subset C$.

Now set $D:=\Cso(\tilde{\pi}((A \otimes C(X)) \rc \Gamma))$,
$L:= \cl{K\cdot D}$.
Here and below we regard $C \subset \cM(D)$ in the obvious way. 
Note that $L$ is an ideal of $D$, because $K$ is a $\Gamma$-invariant ideal of $C$.
We will show that $L \neq D$.
Since the composite of $\tilde{\pi}$ with the quotient map $D \rightarrow D/L$
gives a non-degenerate $\ast$-homomorphism on $(A \otimes C(X)) \rc \Gamma$
which vanishes on $I$,
this completes the proof.

To show the claim, take a completely full element $a\in (A_{+})_1$.
We will show that the map
\[j_a\colon C\ni c \mapsto \pi(a) c\in D\]
 is isometric.
To see this, we apply Lemma \ref{Lem:cf} to obtain a net $((v_{j, n})_{n=1}^\infty)_j$ of sequences in $A$
satisfying \[\sum_{n} v_{j, n} v_{j, n}^\ast \leq 1,\quad
\stlim\lim_j \sum_{n} v_{j, n} a v_{j, n}^\ast = 1.\]
Since $\pi(A)$ commutes with $C$, for any $c\in C$ and $j$,
we have
\[\|\pi(\sum_n v_{j, n} a v_{j, n}^\ast) c\|=\|\sum_n \pi(v_{j, n}) \pi( a) c \pi(v_{j, n}^\ast)\|\leq \|\pi(a)c\|.\]
Since $\pi$ is non-degenerate, the left most quantity converges to $\|c\|$ as $j \rightarrow \infty$.
Hence
\[\|c\| \leq \|\pi(a)c\|.\]
The reverse inequality is clear.

Choose an approximate unit $(e_i)_i$ of $K$.
Observe that $x\in D$ sits in $L$
if and only if $\lim_i (e_i x) = x$ in norm.
Hence for any $c\in C$ with $j_a(c)\in L$,
one has
\[\lim_i j_a((1-e_i)c)=\lim_i \left( (1-e_i)j_a(c)\right)=0.\]
As $j_a$ is isometric, this yields $\lim_i e_i c =c$ and therefore $c\in K$.
This proves $j_a^{-1}(L) \subset K \subsetneq C$
and hence $L \neq D$.
\end{proof}

Since Lemma 7.2 of \cite{BKKO} is the only point
where the unit element is used,
the rest of the proof (i.e., Lemma 7.3 of \cite{BKKO})
works verbatim.
Therefore we complete the proof of Main Theorem \ref{MThm:simple}.
\section{Classification of tracial weights on reduced crossed products}\label{S:trace}
In this section we prove Main Theorem \ref{MThm:trace}.

We first prove Theorem \ref{Thm:comparison},
which gives a useful characterization of the set $\cI(\alpha, \tau)$ in Definition \ref{Def:cI}.
It is a direct consequence of the following general criterion.
\begin{Lem}\label{Lem:comparison}
Let $\tau \in {\rm T}(A)$ and $a, b \in \Ped{A}_+$.
Assume that for any $\nu \in {\rm T}(A)$ with $\nu \leq \tau$
and any $n\in \IN$, one has
$\nu(a^n)=\nu(b^n)$.
Then there is a sequence
$(v_n)_n$ in $(A)_1$
satisfying
\[v_n v_n^\ast \in \Her{b, A},\]
\[\lim_n \|a- a^{1/2}v_n^\ast v_n a^{1/2} \|_\tau = 0,\quad \lim_n \|b- v_n a v_n^\ast \|_\tau =0.\]
Conversely, assume such a bounded sequence $(v_n)_n$ exists.
Then $\nu(a^n)= \nu(b^n)$ for all $n\in \IN$
and all $\nu \in {\rm T}(A)$ with $\nu \leq \tau$.
\end{Lem}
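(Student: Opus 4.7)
My plan is to dispatch the converse first and then tackle the forward direction by passing to the von Neumann algebra generated by the GNS representation of $\tau$. For the converse, Lemma~\ref{Lem:her} tells me that every $\nu\in T(A)$ with $\nu\leq\tau$ is bounded on $\Her{b,A}$; combining the Cauchy--Schwarz inequality for bounded positive functionals with $\nu\leq\tau$ gives $|\nu(x)|^2\leq\|\nu|_{\Her{b,A}}\|\cdot\|x\|_\tau^2$ for every $x\in\Her{b,A}$. Applying this to the telescoping identity
\[
b^k-(v_n a v_n^*)^k=\sum_{j=0}^{k-1}b^j(b-v_n a v_n^*)(v_n a v_n^*)^{k-1-j}\in\Her{b,A},
\]
together with the standard tracial bound $\|yxz\|_\tau\leq\|y\|\,\|z\|\,\|x\|_\tau$ and the boundedness of $(v_n)$, yields $\|b^k-(v_n a v_n^*)^k\|_\tau\to0$; the symmetric estimate inside $\Her{a,A}$ gives $\|a^k-(a^{1/2}v_n^*v_n a^{1/2})^k\|_\tau\to0$. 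Combined with the tracial identity $(v_n a v_n^*)^k=v_n a^{1/2}(a^{1/2}v_n^*v_n a^{1/2})^{k-1}a^{1/2}v_n^*$, this forces $\nu(a^k)=\nu(b^k)$ in the limit.

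For the forward direction I pass to the GNS representation $\pi_\tau\colon A\to\IB(L^2(A,\tau))$, set $M:=\pi_\tau(A)''$, and let $\bar\tau$ denote the induced faithful normal semifinite trace on $M$; since $a,b\in\Ped{A}$, their images lie in the definition ideal $\mathfrak{m}_{\bar\tau}$. The crucial step is to convert the hypothesis into a statement about the center-valued trace $E_Z\colon\mathfrak{m}_{\bar\tau}\to Z(M)$. Each $z\in Z(M)_+$ with $z\leq 1$ induces a bounded tracial positive functional
\[
\nu_z(x):=\bar\tau(z\pi_\tau(x)),\qquad x\in A_+,
\]
with $\nu_z\leq\tau$, and $\nu_z\in T(A)$ whenever $z\neq 0$ (centrality of $z$ is essential for traciality). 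Applying the hypothesis to the family $\{\nu_z\}_z$ forces $\bar\tau(z(a^n-b^n))=0$ for every $n\in\IN$ and every $z\in Z(M)$, i.e.\ $E_Z(a^n)=E_Z(b^n)$ for all $n$. Approximating the spectral projections $\chi_{(s,\infty)}(a),\chi_{(s,\infty)}(b)$ (each dominated by $s^{-1}a$, $s^{-1}b$ respectively, and so lying in $\mathfrak{m}_{\bar\tau}$) from below by continuous functions that vanish at $0$, and invoking normality of $E_Z$, upgrades this to equality of center-valued traces of every spectral projection of $a$ and $b$. Standard comparison theory in a semifinite von Neumann algebra with faithful normal trace then produces a partial isometry $W\in M$ with
\[
W^*W=s(a),\qquad WW^*=s(b),\qquad W\pi_\tau(a)W^*=\pi_\tau(b).
\]
This partial-isometry construction is the main obstacle of the proof and the point where, as advertised in the introduction, von Neumann algebra theory enters.

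To finish I realise $W$ approximately by elements of $A$ while enforcing the hereditary-subalgebra condition. Let $h_\epsilon(t):=\min(t/\epsilon,1)$; then $h_\epsilon$ is continuous on $\sigma(b)$, vanishes at $0$, satisfies $\|h_\epsilon\|\leq 1$, and $h_\epsilon(b)\to s(b)$ strongly in $M$ as $\epsilon\to 0$. By Kaplansky's density theorem the unit ball of $A$ is $\ast$-strong dense in the unit ball of $M$; testing only against the finitely many vectors $\hat a,\hat{a^{1/2}},\hat b,\hat{b^{1/2}}\in L^2(M,\bar\tau)$ (a metrizable restriction) I extract a sequence $u_n\in(A)_1$ with $\pi_\tau(u_n)\to W$ in this sense. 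Setting $v_n:=h_{\epsilon_n}(b)u_n$ with $\epsilon_n\to 0$ sufficiently fast gives $\|v_n\|\leq 1$ and
\[
v_n v_n^*=h_{\epsilon_n}(b)u_n u_n^* h_{\epsilon_n}(b)\in h_{\epsilon_n}(b)A\,h_{\epsilon_n}(b)\subset\Her{b,A},
\]
so the hereditary-subalgebra condition is automatic. The two $\tau$-norm convergences follow by combining the identities $W\pi_\tau(a)W^*=\pi_\tau(b)$ and $W^*W\,a^{1/2}=a^{1/2}$ with the strong convergence $h_{\epsilon_n}(b)\to s(b)$, the tracial bound $\|yxz\|_\tau\leq\|y\|\,\|z\|\,\|x\|_\tau$, and a dominated-convergence estimate such as $\|bh_{\epsilon_n}(b)^2-b\|_\tau^2\leq\bar\tau(b^2\chi_{(0,\epsilon_n)}(b))\to 0$.
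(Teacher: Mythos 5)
The converse direction and the overall architecture of your forward argument (GNS representation, matching of spectral data via comparison theory, Kaplansky density, cutting by $h_{\epsilon_n}(b)$ to land in $\Her{b,A}$) parallel the paper's proof, and your reduction to the central functionals $\nu_z$ is a legitimate reformulation of the hypothesis. However, there is a genuine gap at the step you yourself identify as the main obstacle: the claim that ``standard comparison theory'' produces a \emph{single} partial isometry $W$ with $W^\ast W=s(a)$, $WW^\ast=s(b)$ and $W\pi_\tau(a)W^\ast=\pi_\tau(b)$. Comparison theory gives the Murray--von Neumann equivalence $\chi_S(a)\sim\chi_S(b)$ for each individual Borel set $S\subset(0,\infty)$, but the relation $WaW^\ast=b$ forces $W\chi_S(a)W^\ast=\chi_S(b)$ \emph{simultaneously} for all $S$, i.e.\ $W$ must spatially identify the two projection-valued measures, and hence conjugate $\Wso(a)s(a)$ onto $\Wso(b)s(b)$. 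Such a coherent intertwiner need not exist even when all spectral projections are pairwise equivalent: for instance, if $M$ is the hyperfinite ${\rm II}_1$ factor and $a$, $b$ are positive generators of a Cartan masa and of a singular masa respectively, each with distribution equal to Lebesgue measure on $[0,1]$, then $\mathrm{tr}(a^n)=\mathrm{tr}(b^n)$ for all $n$ and every spectral projection of $a$ is equivalent to the corresponding one of $b$, yet no unitary (and, since $s(a)=s(b)=1$, no partial isometry of the required form) satisfies $WaW^\ast=b$, because that would conjugate a Cartan masa onto a singular one.

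What is true, and what the paper actually proves, is the \emph{approximate} statement: by matching the equivalent spectral projections over finer and finer partitions of the spectrum, one obtains a sequence $(u_n)_n$ in $M$ with $\|u_nau_n^\ast-b\|\to 0$ in norm. Your final approximation step survives this weakening essentially unchanged --- replace the fixed $W$ by $u_n$, apply Kaplansky to each $u_n$, and interleave the limits --- so the gap is localized, but as written the proof relies on a false intermediate claim and is incomplete. Two smaller points: you should first pass to the quotient by $\ker\|\cdot\|_\tau$ (or note that Kaplansky gives elements of $\pi_\tau(A)$, which must then be lifted to $(A)_1$ using density of $p((A)_1)$ in the unit ball of the quotient); and in a properly infinite semifinite algebra the ``center-valued trace'' $E_Z$ is not literally a map into $Z(M)$, though your actual use of it --- testing against all central elements --- is sound.
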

\begin{proof}
By passing to a quotient of $A$, we may assume that $\tau$ is faithful.
(Indeed for any quotient $\ast$-homomorphism $p \colon A \rightarrow A/J$,
$p((A)_1)$ is norm dense in $(A/J)_1$.)

We first remark that $\tau$ is bounded on $\Her{a+b, A}$ by Lemma \ref{Lem:her}.
By multiplying with appropriate positive scalars if necessary, throughout the proof, we may assume that
\[\|a\|, \|b\|, \|\tau|_{\Her{a+b, A}}\| \leq 1.\]

Assume that the elements $a, b\in \Ped{A}_+$ satisfy the first condition.
Let $\pi_\tau$ denote the GNS representation of the weight $\tau$ (see Proposition 5.1.3 of \cite{Pedbook}).
We regard $A \subset M:=\pi_\tau(A)''$ in the obvious way.
By Lemma \ref{Lem:her} and the assumption, we have
\[\nu(\chi_{[s, t)}(a))=\nu(\chi_{[s, t)}(b))< \infty\]
for all $0< s < t$ and $\nu\in {\rm T}(A)$ with $\nu \leq \tau$.
Note that any $\nu \in {\rm T}(A)$ with $\nu \leq \tau$ extends to a normal tracial weight on $M$ which we denote by the same symbol $\nu$.
By the comparison theorem,
this implies the Murray--von~Neumann equivalence of 
the two spectral projections $\chi_{[s, t)}(a)$, $\chi_{[s, t)}(b)$ in $M$.
By considering appropriate (norm) approximations of $a$ and $b$ by
(orthogonal) linear combinations of the above spectral projections, one can find a sequence $(u_n)_n$ in $M^{\rm u}$
satisfying
\[\lim_n \|u_n a u_n^\ast- b\|=0.\]
Then by the Kaplansky density theorem, one can choose a net $(x_i)_i$ in $(A)_1$
satisfying
$x_i a x_i^\ast \rightarrow b$ in the strong$\ast$ topology of $M$.

Let $\varepsilon>0$.
Fix $s>0$
satisfying $\| b - b^{1+ 2s}\|< \varepsilon$.
Observe that
$b^{s}x_i a x_i^\ast b^{s} \rightarrow b^{1+2s}$ in the strong$\ast$ topology of $M$.
Since $\tau$ is bounded on $\Her{b, A}$, for sufficiently large $i$, the element
\[v:= b^{s} x_i\in (A)_1\]
satisfies
\[\|v a v^\ast - b\|_\tau\leq \|b^s (x_ia x_i^\ast-b)b^s\|_\tau + \|b-b^{1+2s}\| < \varepsilon.\]
(Here we again recall the assumptions $\|b\|, \|\tau|_{\Her{a+b, A}}\|\leq 1$, which implies the inequality $\|b-b^{1+2s}\|_\tau \leq \|b-b^{1+2s}\|$.)
Clearly $v v^\ast \in \Her{b, A}$.
Since $\tau$ is contractive on $\Her{a+b, A}$, the Cauchy--Schwarz inequality yields
\[|\tau(a^{1/2}v^\ast v a^{1/2}) - \tau(a)| =|\tau(v a v^\ast -b)| \leq \|vav^\ast -b \|_\tau <\varepsilon.\] 
Then, as $a- a^{1/2}v^\ast v a^{1/2}$ is a positive contractive element,
we conclude
\[\|a^{1/2}v^\ast v a^{1/2} -a\|_\tau^2= \tau((a- a^{1/2}v^\ast v a^{1/2})^2) \leq \tau(a- a^{1/2}v^\ast v a^{1/2})< \varepsilon.\]
Since $\varepsilon>0$ was arbitrary, this proves that the elements $a$ and $b$ satisfy the second condition in the statement.

Conversely, assume that the elements $a, b \in \Ped{A}_+$ satisfy the second condition in the statement.
Choose a bounded sequence $(v_n)_n \subset A$ which witnesses the condition.
We first observe that, for any $m\in \IN$,
\begin{align*}\|a^m - a^{m/2} v_n^\ast v_n a^{m/2}\|_\tau&= \|a^{(m-1)/2}(a - a^{1/2}v_n^\ast v_n a^{1/2}) a^{(m-1)/2}\|_\tau\\
&\leq \|a^{(m-1)/2}\| \|a - a^{1/2}v_n^\ast v_n a^{1/2}\|_\tau \|a^{(m-1)/2}\| .\end{align*}
This proves $\lim_n \|a^m - a^{m/2} v_n^\ast v_n a^{m/2}\|_\tau=0$ for $m\in \IN$. (The case $m=1$ is the assumption.)
We next show the equality
 \[\lim_n \|b^m- v_n a^{m} v_n^\ast\|_\tau =0\]
 for $m \in \IN$ by induction. 
The case $m=1$ is a part of the present assumption.
 Assume that the equality holds true for a given $m\in \IN$.
 It suffices to show the equality for $m+1$.
Note that for any $n\in \IN$, we have
 
 \begin{align*}
 \|b^{m+1}-v_n a^{m+1} v_n^\ast\|_\tau &\leq \|b(b^m-v_n a^m v_n^\ast)\|_\tau + \|bv_n a^m v_n^\ast - v_n a^{m+1} v_n^\ast\|_\tau\\
&\leq \|b\|\|b^m-v_n a^m v_n^\ast\|_\tau + \|(b-v_n a v_n^\ast)v_n a^m v_n^\ast\|_\tau + \|v_n a (v_n^\ast v_n -1) a^m v_n^\ast\|_\tau\\
&\leq \|b\|\|b^m-v_n a^m v_n^\ast\|_\tau + \|(b-v_n a v_n^\ast)\|_\tau \|v_n a^m v_n^\ast\|\\
&\quad + \|v_na^{1/2} \| \|a^{1/2} (v_n^\ast v_n -1)a^{1/2}\|_\tau \| a^{m-1/2} v_n^\ast\|
 \end{align*}
 As the last quantity converges to $0$ as $n\rightarrow \infty$, this proves the equality for $m+1$.
 
We note that the same convergence condition holds true for all seminorms $\|\cdot \|_\nu$ with $\nu \leq \tau$ in place of $\|\cdot \|_\tau$,
because $\|\cdot \|_\nu \leq \|\cdot \|_\tau$. 
Now the Cauchy--Schwarz inequality implies
\[\nu(a^m)=\lim_n \nu(a^{m/2} v_n^\ast v_n a^{m/2}), \quad \nu(b^m)=\lim_n \nu(v_n a^{m} v_n^\ast)\]
for all $\nu\in {\rm T}(A)$ with $\nu \leq \tau$ and $m\in \IN$, whence the desired condition $\nu(a^m)=\nu(b^m)$ follows from the tracial condition.
\end{proof}

The next lemma gives a non-unital version of the multiplicative domain argument.
For completeness of the article, we include a proof.
\begin{Lem}\label{Lem:bilinear}
Let $a\in A_{+}$. Let $C$ be a unital $\Gamma$-\Cs-algebra.
Let $\varphi \in \Her{a, (A \otimes C) \rc \Gamma}^\ast$
be positive.
Assume that there is a character $\omega$ on $C$
with $\varphi(a \otimes c)=\varphi(a)\omega(c)$ for all $c\in C$.
Then $\varphi$ satisfies
\[\varphi(c_1 b c_2)= \omega(c_1)\varphi(b)\omega(c_2)\]
for all $c_1, c_2 \in C$ and $b \in \Her{a, (A \otimes C) \rc \Gamma}$.
\end{Lem}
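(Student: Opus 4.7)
Set $B := (A\otimes C)\rc \Gamma$ and $H := \Her{a, B}$. I will identify $a$ with $a \otimes 1$ and each $c \in C$ with $1 \otimes c \in \cM(A\otimes C) \subset \cM(B)$, so that elements of $C$ commute with $a$ in $\cM(B)$ and $C\cdot H\cdot C \subset H$. The plan is to execute a non-unital multiplicative domain argument by ``cutting down'' $\varphi$ by $s := a^{1/2} \in A \subset B$ to a genuine state on $\cM(B)$.

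First I would define $\psi_s \colon \cM(B) \to \IC$ by $\psi_s(x) := \varphi(sxs)$. For $x \in \cM(B)_+$ one has $sxs \in B$ and $sxs \leq \|x\|\, a \in H$, so hereditarity of $H$ in $B$ yields $sxs \in H$; by linearity $\psi_s$ is a well-defined bounded positive linear functional on $\cM(B)$ with $\psi_s(1) = \varphi(a)$. If $\varphi(a) = 0$, Cauchy--Schwarz applied to $\psi_s$ on the unital C*-algebra $\cM(B)$ forces $\psi_s \equiv 0$, whence $\varphi$ vanishes on the subset $sBs$; since $sBs$ is norm dense in $H$ (as $aBa \subset sBs \subset H$) and $\varphi$ is continuous on $H$, $\varphi \equiv 0$ and the statement is vacuous.

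Assuming $\varphi(a) > 0$, normalize $\tilde\psi_s := \psi_s/\varphi(a)$, a state on $\cM(B)$. Applying the hypothesis to $c$, $c^*c$ and $cc^*$ in $C$, and invoking $\omega(c^*c) = |\omega(c)|^2 = \omega(cc^*)$, one obtains
\[\tilde\psi_s(1 \otimes c) = \omega(c), \qquad \tilde\psi_s(1 \otimes c^*c) = |\omega(c)|^2 = \tilde\psi_s(1 \otimes cc^*).\]
A standard GNS argument then places $1\otimes c$ in the two-sided multiplicative domain of $\tilde\psi_s$, giving
\[\psi_s(x(1\otimes c)) = \psi_s(x)\omega(c), \qquad \psi_s((1\otimes c)x) = \omega(c)\psi_s(x) \quad (x \in \cM(B)).\]

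To conclude, I would translate back: using $s(1\otimes c) = (1\otimes c)s$, the first identity becomes $\varphi(sxs \cdot (1\otimes c)) = \varphi(sxs)\omega(c)$ for every $x \in \cM(B)$. By density of $sBs$ in $H$ and continuity of $\varphi$, this promotes to $\varphi(b\cdot (1\otimes c)) = \varphi(b)\omega(c)$ for every $b \in H$; the symmetric identity $\varphi((1\otimes c)\cdot b) = \omega(c)\varphi(b)$ is obtained the same way, and iterating produces $\varphi(c_1 b c_2) = \omega(c_1)\varphi(b)\omega(c_2)$. The main obstacle I anticipate is administrative rather than computational: one must justify that $1\otimes c$ genuinely acts as a multiplier of $B$ (so the multiplicative domain argument has a target in $\cM(B)$) and that $\psi_s$ is a well-defined positive functional there, both of which rest on the non-degenerate inclusion $A\otimes C \hookrightarrow B$ extending to $\cM(A\otimes C) \hookrightarrow \cM(B)$ together with the hereditarity of $H$.
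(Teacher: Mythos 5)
Your proof is correct. It runs on the same engine as the paper's argument --- the Cauchy--Schwarz inequality for the positive sesquilinear form induced by $\varphi$, fed by the identity $\varphi(a\otimes cc^\ast)=\varphi(a)\,\omega(cc^\ast)$ --- but the packaging is genuinely different. The paper works directly with the semi-inner product $\ip{b_1,b_2}_\varphi=\varphi(b_1^\ast b_2)$ on $\cl{[(A\otimes C)\rc\Gamma]a^{1/2}}$: after reducing to $c\in\ker(\omega)$ and $b=a^{1/2}b_1a^{1/2}$, it factors $cb=(a^{1/2}\otimes c^\ast)^\ast(b_1a^{1/2})$ and kills $\varphi(cb)$ in a single application of Cauchy--Schwarz, with no need to pass to $\cM((A\otimes C)\rc\Gamma)$, to treat the case $\varphi(a)=0$ separately, or to invoke density of $a^{1/2}Ba^{1/2}$ in $\Her{a,B}$. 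Your route --- compressing by $a^{1/2}$ to a genuine state on the unital algebra $\cM(B)$, citing the standard unital multiplicative-domain lemma, and transferring back by density --- is longer and carries the administrative load you yourself flag (e.g.\ that $a^{1/2}xa^{1/2}\in\Her{a,B}$ for $x\in\cM(B)_+$, which indeed follows from $a^{1/2}xa^{1/2}\le\|x\|\,a$ and hereditarity, and that $a\in\Her{a,B}$), but all of those checks go through, and the approach has the virtue of being modular and of making explicit exactly where unitality is restored. Both proofs are complete.
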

\begin{proof}
Define a semi-inner product $\ip{\cdot, \cdot}_\varphi$ on $\cl{[(A \otimes C) \rc \Gamma] a^{1/2}}$
to be \[\ip{b_1, b_2}_\varphi:= \varphi(b_1^\ast b_2).\]
Since $\varphi$ is bounded and self-adjoint,
it suffices to show the equality $\varphi(cb)=0$ for all
$b \in a[(A\otimes C) \rc \Gamma] a$
and $c\in \ker(\omega)$.
Write $b= a^{1/2} b_1 a^{1/2} $; $b_1\in (A\otimes C) \rc \Gamma$.
Then one has $cb= (a^{1/2} \otimes c^\ast)^\ast(b_1a^{1/2})$.
By the Cauchy--Schwarz inequality, we have
\[|\varphi(cb)|= |\ip{a^{1/2}\otimes c^\ast, b_1 a^{1/2}}_\varphi| \leq \varphi(a \otimes c c^\ast )^{1/2} \varphi(a^{1/2} b_1^\ast b_1 a^{1/2})^{1/2} =0.\]
This shows the claim.
\end{proof}

The next lemma is crucial in our classification theorem of tracial weights.
In the lemma, the set $\cI(\alpha, \tau)$ (see Definition \ref{Def:cI} and Theorem \ref{Thm:comparison}) plays a prominent role.
\begin{Lem}\label{Lem:extension}
Let $\alpha \colon \Gamma \curvearrowright A$
and $\gamma \colon \Gamma \curvearrowright C$
be actions where $C$ is a unital \Cs-algebra.
For $\tau \in {\rm T}(A \rc \Gamma)$ and $a\in \cI(\alpha, \tau|_A)$,
set
\[\cE(\tau, a):=\left\{\varphi \in \Her{a, (A \otimes C) \rc \Gamma}^\ast: \varphi {\rm ~is~ a ~positive~extension ~of~} \tau|_{\Her{a, A \rc \Gamma}}\right\}.\]
Define $\iota_a \colon C \rightarrow (A\otimes C) \rc \Gamma$
by $\iota_a(c):=a \otimes c$.
Then the set
\[S:=\{\varphi \circ \iota_a\in C^\ast: \varphi \in \cE(\tau, a)\}\]
is non-empty, $\Gamma$-invariant, and weak-$\ast$ compact.

Hence in the case of $C=C(X)$ where $X$ is a $\Gamma$-boundary, for any character $\omega$ on $C(X)$,
there is $\varphi \in \cE(\tau, a)$ satisfying
\[\varphi \circ \iota_a = \tau(a) \cdot \omega.\]
\end{Lem}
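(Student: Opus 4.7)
The plan is to establish the three structural properties of $S$---non-emptiness, weak-$\ast$ compactness, and $\Gamma$-invariance---and then deduce the boundary consequence.

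First, since $a \in \Ped{A}_+ \subset \Ped{A \rc \Gamma}_+$, Lemma~\ref{Lem:her} shows $\tau|_{\Her{a, A\rc\Gamma}}$ is a bounded positive functional, and the Krein positive extension theorem supplies at least one $\varphi_0 \in \cE(\tau, a)$. For any $\varphi \in \cE(\tau, a)$ and $c \in C_+$, the bound $0 \leq a \otimes c \leq \|c\|(a \otimes 1)$ inside $(A \otimes C) \rc \Gamma$ yields $\varphi(a \otimes c) \leq \|c\| \tau(a)$, so $\varphi \circ \iota_a$ is a positive functional on $C$ with value $\tau(a)$ at $1$; in particular $S$ is contained in the weak-$\ast$ compact set of positive functionals on $C$ of norm $\tau(a)$. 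For weak-$\ast$ closure, given a net $\omega_i = \varphi_i \circ \iota_a$ converging to some $\omega$, I would replace each $\varphi_i$ by a \emph{minimal-norm} positive extension of $\tau|_{\Her{a, A\rc\Gamma}}$ still realizing $\omega_i$ on $\iota_a(C)$ (via Hahn--Banach on the self-adjoint subspace $\Her{a, A\rc\Gamma} + \iota_a(C)$, on which $\tau$ and $\omega_i$ agree along the common line $\IC \cdot (a \otimes 1)$), extract a weak-$\ast$ cluster point $\varphi \in \cE(\tau, a)$, and obtain $\varphi \circ \iota_a = \omega$.

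The crux of the argument is $\Gamma$-invariance. For $\varphi \in \cE(\tau, a)$ and $g \in \Gamma$, define $\varphi'(\cdot) := \varphi(\lambda_g^\ast \cdot \lambda_g)$ on $\Her{\alpha_g(a), (A \otimes C) \rc \Gamma}$; the equalities $\lambda_g^\ast \Her{\alpha_g(a), A\rc\Gamma} \lambda_g = \Her{a, A\rc\Gamma}$ and $\tau \circ \ad(\lambda_g^\ast) = \tau$ give $\varphi' \in \cE(\tau, \alpha_g(a))$, and $\lambda_g^\ast(\alpha_g(a) \otimes c)\lambda_g = a \otimes \gamma_{g^{-1}}(c)$ yields $\varphi' \circ \iota_{\alpha_g(a)} = g \cdot (\varphi \circ \iota_a)$. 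To transfer this back into $\cE(\tau, a)$, I would pass to the bidual: by Theorem~\ref{Thm:comparison} combined with the construction in the proof of Lemma~\ref{Lem:comparison}, the sequence $(v_n)$ witnessing $a \in \cI(\alpha, \tau|_A)$ is the Kaplansky approximation in the strong-$\ast$ topology of $M := \pi_{\tau|_A}(A)''$ of a unitary $u \in M$ satisfying $u a u^\ast = \alpha_g(a)$. Extending $\varphi'$ normally to the bidual corner $\Her{\alpha_g(a), (A \otimes C)\rc\Gamma}^{\ast\ast}$ and setting $\psi(b) := \bar\varphi'((u \otimes 1)\, b\, (u^\ast \otimes 1))$ for $b \in \Her{a, (A \otimes C)\rc\Gamma}$, the identity $u p_a u^\ast = p_{\alpha_g(a)}$ (where the $p$'s are support projections) ensures $\psi$ is a well-defined positive functional; the tracial identity $\bar\tau \circ \ad(u) = \bar\tau$ ensures $\psi$ restricts to $\tau$ on $\Her{a, A\rc\Gamma}$, so $\psi \in \cE(\tau, a)$; and $u a u^\ast = \alpha_g(a)$ gives $\psi \circ \iota_a = \varphi' \circ \iota_{\alpha_g(a)} = g \cdot (\varphi \circ \iota_a)$.

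For the final clause, the rescaling $S/\tau(a)$ is a non-empty, weak-$\ast$ compact, convex, $\Gamma$-invariant subset of the state space of $C(X)$, so the defining property of a $\Gamma$-boundary forces it to contain every character $\omega$, giving $\tau(a)\cdot\omega \in S$ as required. The hardest step I anticipate is the bidual transfer in the previous paragraph: one must verify that the normally extended $\bar\varphi'$, after conjugation by the bidual element $u \otimes 1$, restricts to a norm-continuous functional on $\Her{a, (A \otimes C)\rc\Gamma}$ with the correct restriction to the unperturbed subalgebra $\Her{a, A\rc\Gamma}$, and it is precisely here that the $L^2(\tau)$-type hypotheses in the definition of $\cI(\alpha, \tau|_A)$---rather than a purely algebraic conjugation---are indispensable.
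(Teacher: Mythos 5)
Your treatment of non-emptiness and of the boundary conclusion is fine, and the weak-$\ast$ compactness of $S$ is in fact immediate: $\cE(\tau,a)$ is a bounded (its elements all have norm $\lim_n\tau(f_n(a))\leq\|\tau|_{\Her{a,A}}\|$ for an approximate unit $(f_n(a))$ of $\Her{a,(A\otimes C)\rc\Gamma}$ taken from $\Cso(a)$), weak-$\ast$ closed set, and $\varphi\mapsto\varphi\circ\iota_a$ is weak-$\ast$ continuous, so your minimal-norm-extension detour for closedness of $S$ is unnecessary. The first half of your invariance argument --- that $\varphi':=\varphi(\lambda_g^\ast\,\cdot\,\lambda_g)$ lies in $\cE(\tau,\alpha_g(a))$ and satisfies $\varphi'\circ\iota_{\alpha_g(a)}=\gamma_g^\ast(\varphi\circ\iota_a)$ --- is also consistent with the mechanism of the paper's proof.

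The gap is in the transfer from $\cE(\tau,\alpha_g(a))$ back to $\cE(\tau,a)$, and it is twofold. First, the hypothesis $a\in\cI(\alpha,\tau|_A)$ does \emph{not} furnish a single unitary $u\in M=\pi_{\tau|_A}(A)''$ with $uau^\ast=\alpha_g(a)$: the proof of Lemma \ref{Lem:comparison} only produces unitaries $u_n$ with $\|u_na u_n^\ast-\alpha_g(a)\|\to 0$, obtained from Murray--von Neumann equivalence of the individual spectral projections, and two positive elements whose powers have equal traces need not be exactly unitarily conjugate in $M$ (already in a ${\rm II}_1$ factor two self-adjoint elements with the same distribution can fail to be conjugate). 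Second, even granting such a $u$, the formula $\psi(b):=\bar{\varphi}'((u\otimes 1)b(u^\ast\otimes1))$ is uncontrolled: the $v_n$ converge to $u$ only in the $\tau$-topology of $M$, whereas $\varphi'$ is an arbitrary Hahn--Banach positive extension whose behaviour on the $C$-leg may be singular with respect to $\tau$, so there is no reason the normal extension $\bar{\varphi}'$ to the bidual interacts with $u\otimes 1$ as required, nor that the resulting $\psi$ restricts to $\tau$ on $\Her{a, A\rc\Gamma}$. You flag this as the hardest step; it is precisely the step that fails. The paper avoids both problems by never conjugating by a limit object: it sets $\varphi_n(x):=\varphi(\lambda_g^\ast v_nxv_n^\ast\lambda_g)$ with the $v_n$ from Definition \ref{Def:cI}, takes a weak-$\ast$ cluster point $\psi$ of the bounded sequence $(\varphi_n)$, and verifies only the two convergences actually needed: $\varphi_n\to\tau$ on $\Her{a,A\rc\Gamma}$ (from $\|a^{1/2}v_n^\ast v_na^{1/2}-a\|_\tau\to0$, traciality, and Cauchy--Schwarz) and $\varphi_n(a\otimes c)\to\varphi(a\otimes\gamma_g^{-1}(c))$ (from $\|v_nav_n^\ast-\alpha_g(a)\|_\tau\to0$ and Cauchy--Schwarz against a positive extension $\tilde{\varphi}$ to the multiplier algebra). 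The cluster point $\psi$ need not be any exact conjugate of $\varphi$; it only has to lie in $\cE(\tau,a)$ and have the correct restriction to $\iota_a(C)$. Replacing your bidual step by this cluster-point argument repairs the proof.
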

\begin{proof}
By the Hahn--Banach theorem (and Lemma \ref{Lem:her}), $\cE(\tau, a)$ is non-empty.
It is clear that $\cE(\tau, a)$ and hence $S$ is weak-$\ast$ compact.

To show the $\Gamma$-invariance of $S$, take any $\varphi \in \cE(\tau, a)$ and $g\in \Gamma$.
Choose a bounded sequence $(v_{n})_{n=1}^{\infty}$ in $A$ as in
the definition of $\cI(\alpha, \tau|_A)$ for the present $a$ and $g$.

For each $n\in \IN$, define $\varphi_n \in \Her{a, (A \otimes C) \rc \Gamma}^\ast$ to be 
\[\varphi_n(x):= \varphi(\lambda_g^\ast v_{n}x v_{n}^\ast \lambda_g ).\]
Observe that each $\varphi_n$ is positive.
Moreover the sequence $(\varphi_n)_n$ is bounded.
Choose a weak-$\ast$ cluster point $\psi$ of $(\varphi_n)_n$.
Clearly $\psi$ is positive.

For any $x =a y a$ with $y\in A \rc \Gamma$, 
we have
\[\varphi_n(x)
= \tau(\lambda_g^\ast v_{n}aya v_{n}^\ast \lambda_g )
=\tau((a^{1/2} y a^{1/2})(a^{1/2} v_{n}^\ast v_{n}a^{1/2})),\]
which converges to $\tau(a^{1/2}ya^{3/2})=\tau(x)$ by the Cauchy--Schwarz inequality.
Therefore $\psi \in \cE(\tau, a)$.
Also for any $c\in C$ and any $n\in \IN$,
we have
\[\varphi_n(a \otimes c)= \varphi(\lambda_g^\ast( v_n a v_n^\ast \otimes c) \lambda_g)
= \varphi(\alpha_g^{-1}(v_n a v_n^\ast)\otimes \gamma_g^{-1}(c)).\]
Let $\tilde{\varphi} \in \cM((A\otimes C) \rc \Gamma)^\ast$
be a positive extension of $\varphi$.
Then the Cauchy--Schwarz inequality yields
\[\begin{split}|\varphi(a\otimes \gamma^{-1}_g(c)) -\varphi_n(a \otimes c)|
&=|\varphi([a- \alpha_g^{-1}(v_n a v_n^\ast)] \otimes \gamma_g^{-1}(c))|\\
&\leq \|a- \alpha_g^{-1}(v_n a v_n^\ast)\|_\tau \cdot \tilde{\varphi} (\gamma_g^{-1}(c^\ast c))^{1/2}.
\end{split}\]
Since 
\begin{align*}
\|a- \alpha_g^{-1}(v_n a v_n^\ast)\|_\tau &= \|\lambda_g^\ast (\alpha_g(a)-v_n a v_n^\ast)\lambda_g\|_\tau \\
&= \|\alpha_g(a)-v_n a v_n^\ast\|_\tau \rightarrow 0 \quad {\rm ~as~}n \rightarrow \infty,\end{align*}
we conclude
\[\psi \circ \iota_a = \gamma_g^\ast(\varphi \circ \iota_a).\]
This proves the $\Gamma$-invariance of $S$.
\end{proof}
We complete the proof of Main Theorem \ref{MThm:trace}.
\begin{proof}[Proof of Main Theorem \ref{MThm:trace}]
By Theorem \ref{Thm:PR} and Remark \ref{Rem:el}, it suffices to show the statement for genuine actions.
Since $\cI(\alpha, \tau|_A)$ is non-degenerate, it suffices to show the equality
\[\tau(a x \lambda_g a)=0\]
for all $g\in \Gamma \setminus R_a(\Gamma)$, $a\in \cI(\alpha, \tau|_A)$, $x\in A$.

Let $g, a, x$ be as above.
By \cite{Fur} (see also Proposition 2.8 in \cite{BKKO}), one can choose a character $\omega$ on $C(\partial_F \Gamma)$
and $f\in C(\partial_F \Gamma)$
satisfying $\omega(f)=1$, $\omega(\beta_g(f))=0$.
By Lemma \ref{Lem:extension}, one has 
$\varphi\in \cE(\tau, a)$ with
$\varphi \circ \iota_{a} = \tau(a) \cdot \omega$.
Then Lemma \ref{Lem:bilinear} yields
\[\begin{split}\tau(a x\lambda_g a)&= \varphi(a x \lambda_g a)\omega(f)
= \varphi(a x \lambda_g a f)\\
& = \varphi(\beta_g(f) a x \lambda_g a)=\omega(\beta_g(f))\varphi(a x \lambda_g a)\\
&=0.\end{split}\]
\end{proof}

\begin{Rem}
If one employs the Cuntz--Pedersen type equivalence relation \cite{CP}
instead of the Murray--von Neumann type relation, then one has the set
$\cJ(\alpha, \tau)$, which is (possibly) larger than $\cI(\alpha, \tau)$, consisting of all elements
$a\in (\Ped{A}_+)_1$
satisfying the following condition:
For any $g\in \Gamma$, there is a double sequence $(v_{n, m})_{n, m}$ in $A$
satisfying
\[\sum_m v_{n, m} v_{n, m}^\ast \in \Her{\alpha_g(a), A},\quad \sup_{n} \| \sum_m v_{n,m} v_{n, m}^\ast\| < \infty,\]
\[\lim_n \|\sum_m a^{1/2}v_{n, m}^\ast v_{n, m} a^{1/2} -a \|_\tau =0,\quad \lim_n \|\sum_m v_{n, m} a v_{n, m}^\ast -\alpha_g(a) \|_\tau =0.\]
Although we are not aware of any applications at the moment,
it would be worth to point out that our results in this article work without major corrections
after replacing $\cI(\alpha, \tau)$ by $\cJ(\alpha, \tau)$.
\end{Rem}
\begin{Rem}
Since any boundary action of $\Gamma$ factors through the quotient group $\Gamma/ R_a(\Gamma)$ \cite{Fur},
one can drop the condition on elements $g$ in $R_a(\Gamma)$ in the definition of $\cI(\alpha, \tau)$ (and $\cJ(\alpha, \tau))$ 
without changing the conclusions and proofs of Main Theorem \ref{MThm:trace}.
However we employ the present definition as
it looks more natural, leads to a complete statement (Lemma \ref{Lem:extension}),
 and we are not aware of any important applications of this formal generalization.
\end{Rem}

\subsection*{Acknowledgements}
The author is grateful to the referee for careful reading and helpful comments.
This work was supported by JSPS KAKENHI Early-Career Scientists
(No.~19K14550).

\end{document}